\definecolor{darkgreen}{rgb}{0.0, 0.63, 0.0}
\theoremstyle{definition}
\newtheorem{definition}{Definition}[section]
\theoremstyle{lemma}
\newtheorem{theorem}{Theorem}
\newtheorem{corollary}{Corollary}[theorem]
\newtheorem{lemma}[theorem]{Lemma}
\DeclareMathOperator*{\argmin}{arg\,min}
\newcommand{\vol}{\text{Vol}}
\newcommand{\norm}{\text{Nm}}
\newcommand{\trace}{\text{Tr}}
\begin{document}
\title{Unit Reducible Fields and Perfect Unary Forms}
\author{Alar Leibak, Christian Porter, Cong Ling}
\maketitle
\begin{abstract}
    In this paper, we introduce the notion of unit reducibility for number fields, that is, number fields in which all positive unary forms attain their nonzero minimum at a unit. Furthermore, we investigate the link between unit reducibility and the number of homothety classes of perfect unary forms for a given number field, and prove an open conjecture about the number of classes of perfect unary forms in real quadratic fields, stated by D. Yasaki.
\end{abstract}
\section{Introduction}
Let $K$ be a real algebraic number field of degree $n$ over $\mathbb{Q}$ with ring of integers $\mathcal{O}_K$. Associate to $K$ the canonical embeddings $\sigma_1,\dots,\sigma_n$ into $\mathbb{R}$. A quadratic form $f: K^m \to K$, defined by

\[
f(x_1,\ldots,x_m) = \sum_{k,l=1}^{m} f_{kl}x_kx_l, \quad f_{kl}=f_{lk}\in K,
\]
of rank $m$ is said to be positive definite if

\[
\sigma_i(f)(x_1,\ldots,x_m) = \sum_{k,l} \sigma_i(f_{kl})x_kx_l
\]
is positive definite for each $i =1,\dots,n$. Throughout the paper, we will simply refer to positive definite quadratic forms as quadratic forms, unless we need to clarify.

If $f$ is defined by a single variable in $K$, the corresponding quadratic form is called a \emph{unary quadratic form}. If $f$ generates a positive-definite quadratic form, we say that the element $f$ is \emph{totally positive}. We denote by $K_{>>0}$ the set of totally positive elements of $K$, i.e $K_{>>0}=\{a\in K \ | \ \sigma_i(a)>0, \ i=1, \ldots,n \}$. Let $\omega_1=1, \omega_2,\ldots, \omega_n$ be an integral basis of $\mathcal{O}_K$. Note that for all $a \in K_{>>0}$,
\begin{align*}
    q_a: K \to \mathbb{Q}, \hspace{2mm} &q_a(x_1, x_2, \ldots, x_n)=\trace_{K/\mathbb{Q}}(a(x_1\omega_1+ x_2\omega_2 +\ldots+ x_n\omega_n)^2) \\&= \sum_{k,l=1}^{n} \trace_{K/\mathbb{Q}}(a\omega_k\omega_l)x_kx_l
\end{align*}
corresponds to an $n$-dimensional  quadratic form with rational coefficients, known as the trace quadratic form. For a totally positive $a\in K$, we will denote by \[\mu(a)=\min_{x \in \mathcal{O}_K \setminus \{0\}}  \trace_{K/\mathbb{Q}} (ax^2) = \min_{(x_1,x_2,\ldots, x_n)\in \mathbb{Z}^n\setminus \{0\}} q_a(x_1, x_2,\ldots, x_n), \] 
and $\mathcal{M}(a)=\{x \in \mathcal{O}_K: \trace_{K/\mathbb{Q}}(ax^2)=\mu(a)\}$.

Let $\mathcal{O}_K^\times$ denote the unit group of $\mathcal{O}_K$. We say that a unary form defined by $a \in K_{>>0}$ is \emph{reduced} if $\trace_{K/\mathbb{Q}}(a) \leq \trace_{K/\mathbb{Q}}(au^2)$, for all $u \in \mathcal{O}_K^\times$. We denote the set of all $a \in K_{>>0}$ such that $a$ corresponds to a reduced unary form by $\mathcal{F}_K$, and call this set the \emph{reduction domain} of unary forms in $K$. Note that every unary form is equivalent to a reduced unary form.

Finally, we give the following important definition.
\begin{definition}
Let $K$ be a totally real number field with ring of integers $\mathcal{O}_K$ and unit group $\mathcal{O}_K^\times$. We say that $K$ is \emph{unit reducible} if, for all $a \in K_{>>0}$,
\begin{align*}
    \mu(a)=\min_{u \in \mathcal{O}_K^\times} \trace_{K/\mathbb{Q}}(au^2).
\end{align*}
\end{definition}
Unit reducibility is a useful property in the study of lattice-based cryptography \cite{por21}. The security of many lattice-based cryptosystems is underpinned by the so-called ``shortest vector problem'' (SVP), which asks the adversary to find a shortest nonzero vector of a lattice given an arbitrary lattice basis. Lattices defined over number fields are increasingly used in lattice-based cryptography due to their efficiency. Lattice reduction over number fields is a common strategy to find short vectors in such lattices \cite{euclidean,lllmodules}. Some  lattices are constructed from principal ideals of number fields \cite{cramer}, and so the problem translates to finding the shortest nonzero arithmetical value of a unary quadratic form. When the field is unit reducible, this suggests that finding the smallest arithmetical nonzero value of a unary quadratic form corresponds also to finding the shortest generator of the principal ideal. Although at high dimensions the shortest generator is unlikely to be a shortest vector of a principal ideal lattice, this can be true at small dimensions. Finding unit reducible number fields can lead to algorithms of lattice reduction at high dimensions, the latter being an ongoing subject of study.

A unary form $ax^2$ is said to be \emph{perfect} if it is uniquely determined by $\mu(a),\mathcal{M}(a)$. It is immediately clear that if $ax^2$ is perfect, then for any $\lambda>0 \in \mathbb{Q}$, $\mu(\lambda a)=\lambda \mu(a)$, $\mathcal{M}(\lambda a)= \mathcal{M}(a)$, and so perfect unary forms can be considered by their homothety classes. We denote by $n_K$ the number of $\text{GL}_1(\mathcal{O})$-inequivalent homothety classes of perfect forms of a real number field $K$. In this paper, we establish a method of determining the number of classes of perfect forms by studying unit reducible fields. The main result of the paper is as follows.
\begin{theorem}\label{thm1}
Let $K=\mathbb{Q}(\sqrt{d})$ be a real quadratic field for some positive, square-free integer $d$ not equal to $1$. Then $n_K=1$ if and only if $d$ is of one of the following types:
\begin{itemize}
    \item $T1$: $d=m^2+1$, $m \in \mathbb{N}$, $m$ odd,
    \item $T2$: $d=m^2-1$, $m \in \mathbb{N}$, $m$ even,
    \item $T3$: $d=m^2+4$, $m \in \mathbb{N}$, $m$ odd,
    \item $T4$: $d=m^2-4$, $m \in \mathbb{N}$, $m>3$ odd.
\end{itemize}
\end{theorem}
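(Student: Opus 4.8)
The plan is to translate the statement into one about \emph{unit reducibility}, reduce unit reducibility to an explicit inequality on algebraic integers, and then carry out the classification.

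\textbf{Step 1: $n_K=1$ if and only if $K$ is unit reducible.} After scaling by $\mathbb{Q}_{>0}$, a totally positive $a$ is determined by $t(a)=\log\!\big(\sigma_1(a)/\sigma_2(a)\big)\in\mathbb{R}$, and $a\mapsto au^2$ becomes translation of $t$ by an integer multiple of $\tau\eqdef 4\log\varepsilon$, where $\varepsilon>1$ is the fundamental unit; hence homothety classes modulo $\mathrm{GL}_1(\mathcal{O})$ form a circle $\mathbb{R}/\tau\mathbb{Z}$, on which $a\mapsto\mu(a)$ descends to a continuous function $\bar\mu$ equal to the lower envelope of the strictly convex functions $f_v(t)=e^{t/2}\sigma_1(v)^2+e^{-t/2}\sigma_2(v)^2$, $v\in\mathcal{O}_K\setminus\{0\}$. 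No two distinct $f_v$ are tangent, so the non-smooth points of $\bar\mu$ are exactly the $t$ at which two functions $f_{v_1},f_{v_2}$ with $v_1^2\ne v_2^2$ attain the minimum; at such a point the attaining $v$'s automatically have $\mathbb{Q}$-linearly independent squares (vectors with squares in a common $\mathbb{Q}$-line lie in $\mathbb{Q}\cup\mathbb{Q}\sqrt d$, and such vectors are never simultaneously minimal with $\pm1$ nor with one another), so these non-smooth points are precisely the perfect forms and $n_K$ counts them in one period. If $K$ is unit reducible then on a period $\bar\mu=\min(f_1,f_{\varepsilon^{-1}})$, a minimum of two smooth convex functions, which has the single corner $t=2\log\varepsilon$ (where $1$ and $\varepsilon^{-1}$ tie and $1,\varepsilon^{-2}$ are $\mathbb{Q}$-independent), so $n_K=1$. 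If $K$ is not unit reducible, some $f_v$ dips below $\min(f_1,f_{\varepsilon^{-1}})$ somewhere in $(0,\tau)$; since $\bar\mu(0)=2=f_1(0)$ (as $\mathcal{M}(1)=\{\pm1\}$ for every $d$ relevant here) and $\bar\mu$ is symmetric about $\tau/2$ (because $\mu(\bar a)=\mu(a)$), $\bar\mu$ must detach from $f_1$ at some $t_1\in(0,\tau/2)$ and from $f_{\varepsilon^{-1}}$ at the distinct point $\tau-t_1$, so $n_K\ge 2$.

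\textbf{Step 2: an inequality characterizing unit reducibility.} One computes $\mathcal{F}_K=\{a\ \text{totally positive}:\varepsilon^{-2}\le\sigma_1(a)/\sigma_2(a)\le\varepsilon^2\}$, so $K$ is unit reducible iff $\trace_{K/\mathbb{Q}}\!\big(a(v^2-1)\big)\ge0$ for all $a\in\mathcal{F}_K$ and all $v\in\mathcal{O}_K\setminus\{0\}$, i.e.\ iff $v^2-1$ lies in the dual cone of $\mathcal{F}_K$. Working out that dual cone (whose extreme rays are governed by $\varepsilon^{\pm2}$), this is equivalent to: every $v\in\mathcal{O}_K$ with $\sigma_2(v)^2<1<\sigma_1(v)^2$ (the opposite case being symmetric) satisfies
\[
\sigma_1(v)^2+\varepsilon^2\sigma_2(v)^2\ \ge\ 1+\varepsilon^2 .
\]
An integer multiple $kw$ of a unit $w$ satisfies this for free (since $\trace(a(kw)^2)=k^2\trace(aw^2)\ge k^2\trace(a)\ge\trace(a)$ on $\mathcal{F}_K$), and AM--GM makes it hold whenever $2|\nrd(v)|\ge\varepsilon+\varepsilon^{-1}$ (write $\nrd$ for the norm). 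Hence the only possible violators are non-units $v$ that are not integer multiples of units, with $|\nrd(v)|<\tfrac12(\varepsilon+\varepsilon^{-1})$ and $\sigma_1(v)^2$ strictly between the roots of $X^2-(1+\varepsilon^2)X+\varepsilon^2\nrd(v)^2$.

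\textbf{Step 3: the ``if'' direction.} A short Pell-equation argument shows $d\in T1\cup T2\cup T3\cup T4$ exactly when the fundamental unit has the minimal possible $\sqrt d$-coefficient: $\varepsilon=m+\sqrt d$ for types $T1,T2$ and $\varepsilon=\tfrac12(m+\sqrt d)$ for $T3,T4$, whence $\tfrac12(\varepsilon+\varepsilon^{-1})$ equals $\sqrt d,\ m,\ \tfrac12\sqrt d,\ \tfrac12 m$ respectively. For such $d$, writing a putative violator as $v=a+b\sqrt d$ (or $v=a+b\omega$), the condition $\sigma_2(v)^2<1$ pins $a$ to one of two consecutive integers near $b\sqrt d$; one value gives $v=b\varepsilon$, an integer multiple of a unit (harmless by Step 2), and the other gives a closed form for the norm — for instance $|\nrd(v)|=b(2m-b)+1\ge 2m$ in type $T1$, with analogous bounds in the other types — which always exceeds $\tfrac12(\varepsilon+\varepsilon^{-1})$. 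So the displayed inequality holds, $K$ is unit reducible, and $n_K=1$; the finitely many small discriminants are checked directly.

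\textbf{Step 4: the ``only if'' direction, the main obstacle.} Assume $d\notin T1\cup\cdots\cup T4$, so the fundamental unit has $\sqrt d$-coefficient $>1$; one must exhibit a non-unit $v$, not an integer multiple of a unit, violating the inequality of Step 2. The natural candidate is the first convergent $v_0=\lfloor\sqrt d\rfloor+\sqrt d$ (or its analogue built from $\omega$): it is a non-unit with $\sigma_2(v_0)^2<1$, $|\nrd(v_0)|\le 2\lfloor\sqrt d\rfloor$, and $\sigma_1(v_0)<\varepsilon$, and substituting $\sigma_1(v_0)^2=\nrd(v_0)^2/\sigma_2(v_0)^2$ and using that $\varepsilon$ is large enough — which it is precisely because the relevant continued fraction (of $\sqrt d$, resp.\ of $\omega$) now has an extra partial quotient — drives the left side below $1+\varepsilon^2$. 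The genuinely delicate situations, which account for the exact shapes $T1$--$T4$ and in particular the restriction $m>3$ in $T4$, arise when $v_0$ happens to have comparatively large norm or when the fundamental unit is only barely non-minimal (little room between the convergent denominators); there one replaces $v_0$ by a carefully chosen later or intermediate convergent so that $\sigma_1(v)^2$ falls between the two roots above, supplemented by a finite verification for a bounded list of exceptional $d$. Combining Steps~1, 3 and 4 yields the theorem.
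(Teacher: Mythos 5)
Your Steps 1--3 are essentially sound, and Step 1 in particular is a legitimate (and arguably cleaner) substitute for the paper's route: the paper proves ``unit reducible $\Rightarrow n_K=1$'' via Cramer's rule on the minimal vectors together with connectivity of the graph of neighbouring perfect forms, and ``not unit reducible $\Rightarrow n_K\geq 2$'' via Koecher's existence theorem, whereas your lower-envelope/corner count on $\mathbb{R}/\tau\mathbb{Z}$ handles both implications at once. Step 2 agrees with the paper's description of $\mathcal{F}_K$ (the $\tanh(R_K)/\sqrt{d}$ lemma is exactly your condition $\varepsilon^{-2}\leq\sigma_1(a)/\sigma_2(a)\leq\varepsilon^{2}$, and checking the linear condition on the two extreme rays is correct), and Step 3 parallels the paper's ``if'' direction, which is instead done by showing the associated rational binary form becomes reduced after an integral shift; your direct pinning of a putative violator works provided you also use the bound $\sigma_1(v)^2<1+\varepsilon^2$ to limit the $\sqrt{d}$-coefficient of $v$, and provided the half-integral cases $d\equiv 1\bmod 4$ are written out.

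The genuine gap is Step 4, which is the substantive half of the classification: you must prove that whenever the fundamental unit does not have the minimal $\sqrt{d}$-coefficient there exists a non-unit $v$ with $\sigma_1(v)^2+\varepsilon^2\sigma_2(v)^2<1+\varepsilon^2$, and what you offer is ``the first convergent works because $\varepsilon$ is large enough, and in delicate cases replace it by a carefully chosen later or intermediate convergent plus a finite verification.'' No quantitative lower bound on $\varepsilon$ is actually derived from the ``extra partial quotient,'' the replacement convergent is never specified (note that for $d$ just below a perfect square your candidate $\lfloor\sqrt d\rfloor+\sqrt d$ has $|\norm_{K/\mathbb{Q}}(v_0)|\approx 2\sqrt d$ and $\sigma_2(v_0)^2$ close to $1$, so it is precisely the wrong element and one must pass to $\lceil\sqrt d\rceil+\sqrt d$ or a half-integral analogue), and the ``bounded list of exceptional $d$'' is neither identified nor checked; also note the restriction $m>3$ in $T4$ is only there to exclude $d=5$, already of type $T3$, not a genuinely delicate violator case. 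As written, the only-if direction is therefore asserted rather than proved. The paper closes exactly this step with a soft argument you could substitute: take $a$ on an extreme ray of $\mathcal{F}_K$; the region $\{x\in\mathcal{O}_K:\trace_{K/\mathbb{Q}}(ax^2)<\trace_{K/\mathbb{Q}}(a)\}$ is an ellipse of area $2\pi\cosh(R_K)/\sqrt{\Delta_K}$ relative to the lattice $\mathcal{O}_K$, and once the $\sqrt d$-coefficient of $\varepsilon$ is at least $2$ (equivalently $d\notin T1\cup\cdots\cup T4$) one has $\sqrt{\Delta_K}/(2\cosh R_K)\leq 1/2<\pi/4$, so Minkowski's convex body theorem produces a nonzero $x$ with $\trace_{K/\mathbb{Q}}(ax^2)<\trace_{K/\mathbb{Q}}(a)$, which cannot be a unit because $a$ is reduced; hence $K$ is not unit reducible. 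Either import this volume argument or carry out your convergent analysis with explicit estimates and an explicit exceptional list; until then Step 4 is a hole in the proof.
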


\noindent\textbf{Remark 1.} Watanabe et. al. asked are there infinitely many real quadratic fields with only one equivalence class of unary perfect forms (see \cite{W})? Yasaki proved that all real quadratic fields $ \mathbb{Q}(\sqrt{d}) $, $ d=(2k+1)^2+1 $ is square-free, has that property \cite[Corollary 5.1]{Y}. Moreover, his computations showed that for square-free positive  $ d<200000 $ there were four disjoint families of real quadratic fields with one class of unary perfect forms \cite[Remark at p. 773]{Y}: $T1$, $T2$, $T3$
and $T4$. Our main result (Theorem \ref{thm1}) proves Yasaki's remark.

A real quadratic field $\mathbb{Q}(\sqrt{d})$ ($d>0$ is square-free) is said to be of \emph{Richaud-Degert type} (R-D type) if $d=m^2+r$, where $m,r$ are integers, $m$ positive, $-m < r \leq m$ and $4m \equiv 0 \mod r$ \cite{R-D}. Using similar methods, we also ascertain the following results.
\begin{theorem}\label{thm2}
Let $K=\mathbb{Q}(\sqrt{d})$ be a real quadratic field of R-D type and let $d \equiv 2,3 \mod 4$, or $d \equiv 1 \mod 4$ and $n$ odd. If $K$ is not unit reducible, then $n_K=2$.
\end{theorem}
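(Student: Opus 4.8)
The plan is to reduce Theorem~\ref{thm2} to the single inequality $n_K\le 2$ and to derive that inequality from the explicit shape of the fundamental unit of a Richaud--Degert field. First I would set up the correspondence between perfect unary forms and the reduction domain. Sending $a\in K_{>>0}$ to the real parameter $t(a)=\log(\sigma_1(a)/\sigma_2(a))$ identifies $K_{>>0}/\mathbb Q_{>0}$ with a dense subset of $\mathbb R$, turns the action $a\mapsto au^2$ of $\mathcal O_K^{\times}$ into translation by $4\log\varepsilon_0$ (where $\varepsilon_0>1$ is the fundamental unit), and identifies $\mathcal F_K$ with the interval $|t(a)|\le 2\log\varepsilon_0$. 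The cones $D_x=\{a\in K_{>>0}:x\in\mathcal M(a)\}$, for $x\in\mathcal O_K\setminus\{0\}$, tile $K_{>>0}$; the unit group permutes them, $D_{ux}=u^{-2}D_x$; and $ax^2$ is perfect exactly when $\mathcal M(a)$ contains two $\mathbb Q$-independent pairs $\pm x_1,\pm x_2$, i.e.\ exactly when $t(a)$ is a wall of this tiling. Hence, on the circle $\mathbb R/(4\log\varepsilon_0)\mathbb Z$, $n_K$ equals the number of walls, which is the number of tiles of the induced partition. Since at $t=0$ (where $a\in\mathbb Q_{>0}$) one has $\mathcal M(a)=\{\pm1\}$ for every squarefree $d>1$, the tile through $t=0$ is $D_1$; so $n_K=1$ iff $D_1$ fills all of $\mathcal F_K$, iff $\mu(a)=\trace_{K/\mathbb Q}(a)$ for all $a\in\mathcal F_K$, iff $K$ is unit reducible. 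In particular the failure of unit reducibility gives $n_K\ge2$, and it remains to prove $n_K\le2$.

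For $n_K\le2$ I would use that for $K$ of R--D type $\varepsilon_0$ is known in closed form: the continued fraction of $\sqrt d$ (or $\tfrac{1+\sqrt d}{2}$) has period at most $2$, and $\varepsilon_0$ is essentially of the shape $m+\sqrt d$, $(m^2\pm1)+m\sqrt d$, or $\tfrac{m+\sqrt d}{2}$ according to $r=d-m^2$ and $d\bmod 4$. Substituting this into $|t(a)|\le 2\log\varepsilon_0$ makes $\mathcal F_K$ a completely explicit region of $K_{>>0}$, bounded by the two lines $\trace_{K/\mathbb Q}(a)=\trace_{K/\mathbb Q}(a\varepsilon_0^{\pm2})$. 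The crux is to prove that as $a$ ranges over $\mathcal F_K$ the set $\mathcal M(a)$ is contained in $\{\pm1\}\cup\{\pm x_0\}$ for a single $x_0\in\mathcal O_K$ depending only on the family: then the circle carries only the tiles $D_1$ and $D_{x_0}$ (together with the unit translates meeting $\mathcal F_K$) and thus at most two walls, giving $n_K\le2$. Concretely one lower-bounds $\trace_{K/\mathbb Q}(ax^2)$ for $a\in\mathcal F_K$ in terms of the integer coordinates of $x$ and shows $\trace_{K/\mathbb Q}(ax^2)>\mu(a)$ unless $x$ equals, up to sign and a unit multiple landing in $\mathcal F_K$, either $1$ or the candidate $x_0$ --- the ``half--period'' element singled out by $\sigma_1(x_0)/\sigma_2(x_0)=\pm\varepsilon_0^{-1}$, which one reads off from the continued-fraction data. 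One then checks that $x_0$ is a genuine minimal vector on a subinterval of $\mathcal F_K$ precisely when $K$ is not unit reducible.

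I expect the main obstacle to be exactly this last step. The estimate excluding any third minimal vector must hold uniformly over the whole infinite family and is tight, so it splits into the cases $r\in\{\pm1,\pm2,\pm4\}$ together with $d\bmod 4$ and the parity of the relevant continued-fraction datum, and the congruence hypothesis of Theorem~\ref{thm2} is precisely what keeps it valid: when $\mathcal O_K=\mathbb Z[\sqrt d]$ (so $d\equiv2,3\bmod4$) the estimate goes through directly, while when $\mathcal O_K=\mathbb Z[\tfrac{1+\sqrt d}{2}]$ the stated parity condition is needed to prevent the extra generator from producing a further short vector. Once the estimate is in place, $n_K\le2$ follows, and together with $n_K\ge2$ from the failure of unit reducibility this yields $n_K=2$.
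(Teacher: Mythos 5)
Your overall strategy (Yasaki-style wall counting on the circle $\mathbb{R}/(4\log\varepsilon_0)\mathbb{Z}$: tiles $D_x$, walls $=$ perfect forms, $n_K\ge 2$ exactly when $D_1$ fails to cover $\mathcal{F}_K$) is a legitimate and genuinely different framework from the paper's, which instead exhibits an explicit perfect form and runs Voronoi's neighbour algorithm. But as written your argument has a real gap: the entire content of the bound $n_K\le 2$ is the claim that for every $a\in\mathcal{F}_K$ one has $\mathcal{M}(a)\subseteq\{\pm 1,\pm x_0\}$ up to unit translates, and you neither identify $x_0$ concretely nor carry out the lower bound on $\operatorname{Tr}_{K/\mathbb{Q}}(ax^2)$ that excludes a third wall; you explicitly defer it as ``the main obstacle''. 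This is not a routine verification that can be waved through: it is a tight, uniform estimate over the whole R-D family, and it is precisely where the hypotheses of the theorem enter. Nothing in your sketch verifies that $x_0$ is actually minimal for some $a\in\mathcal{F}_K$, that the resulting wall form is perfect, or that no other $x$ (e.g.\ elements with $\sigma_1(x)/\sigma_2(x)$ close to, but not equal to, $\pm\varepsilon_0^{-1}$) produces an additional wall. Also, your claim that the continued fraction of $\sqrt d$ has period at most $2$ for R-D fields is false in general (e.g.\ $d=13=3^2+4$ has period $5$); what is true, and what the paper uses, is Degert's closed form for the fundamental unit, $u_0=(n^2+d+2n\sqrt d)/r$ when $|r|\neq 1,4$.

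For comparison, the paper sidesteps any uniform estimate over $\mathcal{F}_K$: writing $d=n^2+r$, it takes the explicit element $a=2nd+(n^2+d-1)\sqrt d$ (and its analogue for $d\equiv 1\bmod 4$, $n$ odd), shows the associated binary trace form reduces by a unimodular substitution to $nx_1^2+(r-1)x_1x_2+nx_2^2$, which is reduced precisely because the R-D condition gives $-n\le r-1<n$, so $\mathcal{M}(a)=\{\pm 1,\pm(n-\sqrt d)\}$ and $ax^2$ is perfect; then it uses the Galois conjugate $\sigma_2(a)x^2$, the identity $(n-\sqrt d)u_0=n+\sqrt d$, and the connectedness of the graph of neighbouring perfect forms to show every perfect form is equivalent to $ax^2$ or $\sigma_2(a)x^2$, while non-unit-reducibility forces these two to be inequivalent, giving $n_K=2$. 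If you want to salvage your route, you would need to make $x_0$ explicit (it is, up to units and sign, $n-\sqrt d$, resp.\ $(n-\sqrt d)/2$), prove it is minimal on the relevant subinterval of $\mathcal{F}_K$, and prove the exclusion estimate case by case in $r$ and $d\bmod 4$ --- i.e.\ supply essentially all of the computation your proposal currently postpones.
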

\begin{theorem}\label{thm3}
Let $K$ be the totally real cubic number field with defining polynomial $f(x)=x^3-tx^2-(t+3)x-1$, where $t \geq 0$. Then $n_K=2$ if the ring of integers of $K$ is monogenic.
\end{theorem}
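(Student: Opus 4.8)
The plan is to exploit the extremely explicit arithmetic of the family $f(x)=x^3-tx^2-(t+3)x-1$ — the \emph{simplest cubic fields}. Writing $\rho$ for a root and $K=\mathbb{Q}(\rho)$, one first records the standard facts: $\mathrm{disc}(f)=(t^2+3t+9)^2$ is a square, so $K/\mathbb{Q}$ is cyclic of degree three; the Galois group is generated by the order-three M\"obius substitution $\sigma\colon\rho\mapsto -1/(\rho+1)$; and $\rho$ and $\rho+1=-\sigma(\rho)^{-1}$ are units because $f(0)=-1$ and $f(-1)=1$. Under the hypothesis that $\mathcal{O}_K$ is monogenic we may take $\mathcal{O}_K=\mathbb{Z}[\rho]$ (for this family monogenicity is the familiar squarefreeness-type condition on $t^2+3t+9$, and then $\mathbb{Z}[\rho]=\mathcal{O}_K$), and by a theorem of Shanks $\mathcal{O}_K^\times=\langle-1\rangle\times\langle\rho\rangle\times\langle\sigma(\rho)\rangle$. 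I would next set up the logarithmic picture: under $\Log(a)=(\log\sigma_1(a),\log\sigma_2(a),\log\sigma_3(a))$, homothety by $\mathbb{Q}_{>0}$ shifts $\Log(a)$ along $(1,1,1)$ while the equivalence $a\mapsto au^2$ shifts it by $2\Log(u)$ inside the trace-zero plane, so $\mathrm{GL}_1(\mathcal{O}_K)$-classes of homothety classes of perfect forms form a finite subset of the $2$-torus $\{x_1+x_2+x_3=0\}/2\Log(\mathcal{O}_K^\times)$. Concretely, $ax^2$ is perfect precisely when $\{x^2:x\in\mathcal{M}(a)\}$ spans $K$ as a $\mathbb{Q}$-vector space (the trace pairing turns each minimal vector into a linear condition on $a$, and uniqueness of $a$ is exactly unique solvability of this linear system); and once $a$ has been moved into the reduction domain $\mathcal{F}_K$ — which, up to boundary identifications, is a fundamental domain for the action of $(\mathcal{O}_K^\times)^2$ — one has $\mu(a)=\trace(a)$ and $1\in\mathcal{M}(a)$.

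The analytic core is an enumeration, uniform in $t\ge 0$, of the elements of $\mathcal{O}_K$ that can occur as minimal vectors of a reduced unary form. Using the inequalities $\trace(a)\le\trace(au^2)$ that define $\mathcal{F}_K$ to bound the shape of a reduced $a$, together with the arithmetic–geometric mean inequality applied to the three positive numbers $\sigma_i(a)\sigma_i(x)^2$ (whose product is $N(a)N(x)^2$), I expect to confine the minimal vectors to a short, explicit list of ``relevant vectors'': up to sign and up to multiplication by a unit in $\langle\rho,\sigma(\rho)\rangle$ these should be $1$ together with a bounded number of explicit $\mathbb{Z}$-combinations of $1,\rho,\rho^2$, the cyclic Galois symmetry reducing the number of cases by essentially a factor of three. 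Small values of $t$ can be checked directly; the substantive point is that this list — and hence the combinatorial type of the tessellation — must be shown to stabilize for all large $t$, even though $\mathcal{F}_K$ itself degenerates as $t\to\infty$ (the ratios $\sigma_i(a)/\sigma_j(a)$ of a reduced form can become arbitrarily large or small).

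With the relevant vectors in hand, the final step is a bounded case analysis: for each subset $S$ with $\{x^2:x\in S\}$ spanning $K$, solve the linear system $\trace(ax^2)=\trace(a)$, $x\in S$, retain only the totally positive and genuinely reduced solutions, and decide whether $\mathcal{M}(a)=S$ (so $a$ is perfect) or $\mathcal{M}(a)\supsetneq S$ (so $a$ lies on a lower-dimensional face and a larger $S$ is the correct configuration). This produces the complete list of perfect reduced forms; computing the residual action of $\mathcal{O}_K^\times$ on it — and noting that the cyclic Galois symmetry permutes these forms — one finds exactly two $\mathrm{GL}_1(\mathcal{O}_K)$-orbits and checks that they are genuinely inequivalent (again using the explicit unit group, and recording along the way whether $K$ is unit reducible, in the spirit of Theorem \ref{thm2}). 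I expect the main obstacle to be precisely the uniform-in-$t$ relevant-vector bound of the middle paragraph: ruling out, for all sufficiently large $t$, the appearance of a new family of minimal vectors in the degenerating domain is exactly what upgrades $n_K=2$ from a finite computation to a statement about the whole family.
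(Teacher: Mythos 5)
Your outline correctly identifies the right framework (perfect forms are those whose minimal vectors give a linear system determining $a$ uniquely, the equivalence classes can be enumerated by a Vorono\"i-type neighbour walk, and everything must be made uniform in $t$), but as written it is a plan rather than a proof, and the gap you yourself flag at the end is exactly the substance of the theorem. You never actually produce the uniform-in-$t$ list of possible minimal vectors of a reduced form, never exhibit the two perfect forms, never verify their inequivalence, and never carry out the neighbour analysis; each of these is asserted as an expectation (``I expect to confine the minimal vectors to a short, explicit list'', ``one finds exactly two orbits''). Since $\mathcal{F}_K$ degenerates as $t\to\infty$, none of these steps is a routine finite check, and the AGM/trace estimates you invoke are not shown to give a $t$-independent bound; without that, the case analysis in your final paragraph cannot even be set up.

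For comparison, the paper closes precisely this gap by making everything explicit as polynomial identities in $t$: Lemma \ref{lm4} shows that the three inequalities $\trace_{K/\mathbb{Q}}(a)\le\trace_{K/\mathbb{Q}}(a\,\sigma_2^i(\theta)^2)$, $i=0,1,2$, already cut out $\mathcal{F}_K$, whose rays $r_1,r_2,r_3,s_1,s_2,s_3$ are written down in closed form; the two candidate perfect forms are $r_1x^2$ and $\tfrac12 s_1x^2$, whose minimal vectors are computed explicitly (all units, e.g.\ $1,\theta,-t+\theta+\sigma_2(\theta)$ up to sign) and whose inequivalence follows from the integrality obstruction $\tfrac12 s_1\notin\mathbb{Z}[\theta]$ while $r_1u^2\in\mathbb{Z}[\theta]$ for every unit $u$; and the Appendix runs the Vorono\"i neighbour step with explicit facet vectors $\psi_1,\dots,\psi_5$ (again polynomials in $t$, with positivity certificates such as $t^4+6t^3+21t^2+36t+27>0$ valid for all $t$), showing every neighbour of $r_1$ or $s_1$ is homothetic to one of the two, so connectivity of the neighbour graph gives $n_K=2$. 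If you want to complete your proposal, you would need to supply an argument of comparable explicitness — either the closed-form description of $\mathcal{F}_K$ and its rays, or a genuinely $t$-uniform bound on the minimal-vector configurations — since that, not the surrounding formalism, is where the theorem lives.
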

\subsection{Paper organisation}
Section $2$ is split into two parts. In the first, we determine all the classes of unit reducible fields for real quadratic fields using arguments from classical reduction theory. In the second part, we prove that all simplest cubic fields where the ring of integers is monogenic is unit reducible by determining the so-called reduction domain of $K$. In section $3$, using Minkowski's first theorem, we determine a condition which, if satisfied, confirms that a totally general, real number field is not unit reducible. Finally, in section $4$, we conclude by proving theorems $1$ to $3$.
\section{Families of Unit Reducible Fields}\label{sec2}
\subsection{Real quadratic fields}
Let $K=\mathbb{Q}(\sqrt{d})$ for some positive square-free integer $d \neq 1$. The discriminant $\Delta_K$ of the field $K$ is
\begin{align*}
    \Delta_K=\begin{cases}
    &4d \hspace{2mm} \text{if} \hspace{2mm} d \equiv 2,3 \mod 4,
    \\
    &d \hspace{2mm} \text{if} \hspace{2mm} d \equiv 1 \mod 4
    \end{cases},
\end{align*}
and the ring of integers $\mathcal{O}_K$ has the representation $\mathcal{O}_K=\mathbb{Z}[\omega]$, where
\begin{align*}
    \omega=\begin{cases}
    &\sqrt{d} \hspace{2mm} \text{if} \hspace{2mm}d \equiv 2,3 \mod 4,
    \\& \frac{1+\sqrt{d}}{2} \hspace{2mm} \text{if} \hspace{2mm} d \equiv 1 \mod 4
    \end{cases}.
\end{align*}
We associate two embeddings to $\mathbb{Q}(\sqrt{d})$, the trivial embedding $\sigma_1$, and
\begin{align*}
    \sigma_2(x_1+x_2\sqrt{d})=x_1-x_2\sqrt{d},
\end{align*}
where $x_1,x_2 \in \mathbb{Q}$, for any $x_1+x_2\sqrt{d} \in K$.
\begin{lemma}[\cite{lei02}, Lemma 10]\label{lm1}
Let $K$ be a real quadratic field and denote by $u$ the fundamental unit of $K$ satisfying $u>1$. Let $a$ denote a totally positive element of $K$. Then $ax^2$ is reduced if and only if
\begin{align*}
    &\emph{\trace}_{K/\mathbb{Q}}(a) \leq \emph{\trace}_{K/\mathbb{Q}}(au^2),
    \\&\emph{\trace}_{K/\mathbb{Q}}(a) \leq \emph{\trace}_{K/\mathbb{Q}}(au^{-2}).
\end{align*}
\end{lemma}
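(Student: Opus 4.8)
The plan is to turn the (a priori infinite) family of inequalities in the definition of ``reduced'' into a single statement about a convex integer sequence, of which only two terms then need to be checked. First I would invoke the structure of the unit group: since $K$ is real quadratic, $\mathcal{O}_K^\times = \{\pm u^k : k \in \mathbb{Z}\}$, so every square of a unit equals $u^{2k}$ for some $k \in \mathbb{Z}$; hence $ax^2$ is reduced if and only if $\trace_{K/\mathbb{Q}}(a) \leq \trace_{K/\mathbb{Q}}(au^{2k})$ for all $k \in \mathbb{Z}$. Writing $a_1 = \sigma_1(a) > 0$ and $a_2 = \sigma_2(a) > 0$, and noting that $N_{K/\mathbb{Q}}(u) = \pm 1$ forces $\sigma_2(u) = \pm u^{-1}$ and hence $\sigma_2(u^{2k}) = u^{-2k}$, set $g(k) = \trace_{K/\mathbb{Q}}(au^{2k}) = a_1 u^{2k} + a_2 u^{-2k}$ for $k \in \mathbb{Z}$. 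The lemma is then equivalent to the assertion that $g(0) \leq g(k)$ for all $k \in \mathbb{Z}$ if and only if $g(0) \leq g(1)$ and $g(0) \leq g(-1)$.

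The forward implication is immediate. For the converse, the crucial point is that $g$ is a convex sequence: a short computation gives
\[ g(k+1) + g(k-1) - 2g(k) = (u - u^{-1})^2\bigl(a_1 u^{2k} + a_2 u^{-2k}\bigr) \geq 0 \]
for every $k$, using $a_1, a_2 > 0$ together with the identity $u^2 + u^{-2} - 2 = (u - u^{-1})^2$. Equivalently, the forward differences $d_k = g(k+1) - g(k)$ are non-decreasing in $k$. From the hypothesis $g(0) \leq g(1)$ we get $d_0 \geq 0$, hence $d_k \geq 0$ and $g(k) \leq g(k+1)$ for all $k \geq 0$, so $g(k) \geq g(0)$ for $k \geq 0$ by telescoping; symmetrically, $g(-1) \leq g(0)$ gives $d_{-1} \leq 0$, hence $d_k \leq 0$ for all $k \leq -1$, so $g(k) \geq g(0)$ for $k \leq 0$ as well. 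This is exactly what was claimed.

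I do not anticipate a genuine obstacle here; the argument is in essence the fact that a convex function is unimodal. The two small points needing care are the bookkeeping when $N_{K/\mathbb{Q}}(u) = -1$ (so $\sigma_2(u) = -u^{-1}$), which disappears after squaring, and making sure the convexity conclusion is drawn for both tails $k \to +\infty$ and $k \to -\infty$, each controlled by one of the two hypotheses. One could equivalently run the argument over the reals, observing that $t \mapsto a_1 u^{2t} + a_2 u^{-2t}$ is strictly convex with a unique minimiser $t^\ast$ and that $g(0) \leq g(\pm 1)$ forces $t^\ast \in (-1,1)$, whence $g$ is monotone on $\mathbb{Z}_{\geq 0}$ and on $\mathbb{Z}_{\leq 0}$; but the discrete computation above is cleaner and avoids calculus.
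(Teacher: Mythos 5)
Your proof is correct. Note that the paper does not actually prove this lemma: it is imported verbatim from \cite{lei02} (Lemma 10), so there is no internal argument to compare against. Your route is the standard self-contained justification: since $\mathcal{O}_K^\times=\{\pm u^k\}$, reducedness amounts to the sequence $g(k)=\trace_{K/\mathbb{Q}}(au^{2k})=\sigma_1(a)u^{2k}+\sigma_2(a)u^{-2k}$ attaining its minimum at $k=0$, and your identity $g(k+1)+g(k-1)-2g(k)=(u-u^{-1})^2\bigl(\sigma_1(a)u^{2k}+\sigma_2(a)u^{-2k}\bigr)\geq 0$ correctly reduces this to the two checks $g(0)\leq g(\pm 1)$; the handling of $N_{K/\mathbb{Q}}(u)=-1$ via squaring is also right.
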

Using the notation in Lemma \ref{lm1}, set $a=a_1+a_2\sqrt{d} \in K_{>>0}$, $a_1,a_2 \in \mathbb{Q}$ and let $u^2=u_1+u_2\sqrt{d}$ where $u_1,u_2>0$, so $u^{-2}=u_1-u_2\sqrt{d}$. Then the form $ax^2$ is reduced if and only if
\begin{align*}
    2a_1 \leq 2u_1a_1 - 2du_2|a_2|,
\end{align*}
and so rearranging gives
\begin{align*}
    \frac{|a_2|}{a_1} \leq \frac{u_1-1}{du_2}.
\end{align*}
We are now equipped to prove the following theorem.
\begin{theorem}
A real quadratic field $K=\mathbb{Q}(\sqrt{d})$ is unit reducible if and only if $d$ is one of the four types listed in theorem \ref{thm1}.
\end{theorem}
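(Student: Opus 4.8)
The plan is to reduce the statement to an elementary Diophantine analysis of the inequality derived just above, namely that $K = \mathbb{Q}(\sqrt d)$ is unit reducible precisely when
\[
\sup_{a \in K_{>>0}} \frac{|a_2|}{a_1} \;\le\; \frac{u_1 - 1}{d\,u_2},
\]
where $u^2 = u_1 + u_2\sqrt d$ is the square of the fundamental unit $u > 1$. Wait — I must be careful: $K$ is unit reducible iff \emph{every} totally positive $a$ is equivalent under $\mathcal{O}_K^\times$ to one lying in $\mathcal{F}_K$ AND that reduced representative actually achieves $\mu(a)$ at a unit. But the cleaner route, and the one I would pursue, is the following. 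First I would observe that $\mathcal{O}_K^\times$ acts on $K_{>>0}$ by $a \mapsto a u^2$, and that this action, read through the map $a \mapsto \sigma_2(a)/\sigma_1(a) \in (0,\infty)$ (well-defined up to the action), is conjugate to translation by $\log(u_1^2 - d u_2^2)^{\pm}$... actually $\sigma_2(u^2)/\sigma_1(u^2) = u^{-4}$, so the orbit of the ratio $t_a := \sigma_2(a)/\sigma_1(a)$ under the unit action is $\{t_a u^{-4k} : k \in \mathbb{Z}\}$, a geometric progression with ratio $u^{-4}$. The fundamental domain for this action on $(0,\infty)$ is any half-open interval $[c, c u^4)$.

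The key step is then to characterize $\mathcal{F}_K$ in terms of $t_a$: from the displayed inequality, writing $a_1 = \tfrac{1}{2}(\sigma_1(a) + \sigma_2(a))$ and $a_2\sqrt d = \tfrac12(\sigma_1(a) - \sigma_2(a))$, the condition $\frac{|a_2|}{a_1} \le \frac{u_1-1}{du_2}$ becomes a two-sided bound on $t_a$, i.e. $\mathcal{F}_K$ corresponds to an interval $I = [\alpha, \beta] \subset (0,\infty)$ symmetric under $t \mapsto 1/t$ (by the two inequalities of Lemma \ref{lm1}), with $\beta/\alpha$ expressible through $u_1, u_2, d$. Unit reducibility holds iff this interval $I$ is large enough that every unit-orbit in $(0,\infty)$ meets it in a point where $\mu(a)$ is attained at a unit — concretely, iff $I$ already contains a full fundamental domain, which translates to $\beta/\alpha \ge u^4$, i.e. to an inequality relating $u_1, u_2$ and $d$ alone. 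Here I would need the standard fact (true for real quadratic fields, and presumably invoked or reproved in the paper) that for $a \in \mathcal{F}_K$ the minimum $\mu(a)$ is indeed $\trace(a)$, attained at $x = 1 \in \mathcal{O}_K^\times$; this is what makes ``reduced'' and ``minimum attained at a unit'' coincide in dimension $2$, and it is why the quadratic case is tractable.

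With the criterion $\beta/\alpha \ge u^4$ (equivalently a clean polynomial inequality in $u_1, u_2, d$) in hand, the remainder is casework on the shape of the fundamental unit. I would split into $d \equiv 2,3 \bmod 4$ (so $\omega = \sqrt d$) and $d \equiv 1 \bmod 4$ (so $\omega = \frac{1+\sqrt d}{2}$), and in each case parametrize when the fundamental unit is ``small'', i.e. of the form $u = m + \sqrt d$ or $u = (m + \sqrt d)/... $ with $u^2 = u_1 + u_2 \sqrt d$ and $u_2$ small (typically $u_2 \in \{1, 2, \dots\}$ bounded). The families $T1$–$T4$ are exactly the Richaud–Degert fields $d = m^2 \pm 1$, $d = m^2 \pm 4$ where the fundamental unit is known explicitly: for $d = m^2+1$ ($m$ odd) one has $u = m + \sqrt d$ with $N(u) = -1$; for $d = m^2 - 1$ ($m$ even), $u = m + \sqrt d$... let me not commit to the exact normalizations here, but the point is these are precisely the $d$ for which $u_2 = 1$ (or the analogue for $d \equiv 1$), making $\frac{u_1-1}{d u_2}$ as large as possible relative to $u^4$. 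I would verify that for each of $T1$–$T4$ the inequality $\beta/\alpha \ge u^4$ holds, and conversely that if $d$ is \emph{not} of R-D type with this minimal unit, then $u^4$ grows faster than the length of $I$ and unit reducibility fails — a monotonicity/growth estimate comparing $u^4 \asymp d^2$ (for a genuinely small unit) against the width of $I$, or $u^4 \gg d^2$ when the unit is not minimal, forcing failure.

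The main obstacle I anticipate is the converse direction: ruling out unit reducibility for \emph{all} $d$ outside the four families. This requires a uniform lower bound on $u$ (equivalently $u_2$) whenever $d$ is not of the special R-D shape — essentially the statement that the Pell equation $x^2 - d y^2 = \pm 1$ (or $\pm 4$) has no small solution unless $d = m^2 \pm 1$ or $m^2 \pm 4$ — combined with showing that even the \emph{largest} possible interval $I$ compatible with such a $u$ still fails to contain a fundamental domain. Making this tight, and handling the boundary/parity conditions (why $m$ must be odd in $T1, T3, T4$ and even in $T2$, which come from square-freeness of $d$ and the $\bmod 4$ dichotomy), is where the real work lies; the direct direction (the four families \emph{are} unit reducible) should follow by plugging the explicit fundamental units into the criterion.
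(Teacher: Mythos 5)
Your central criterion is circular, and in the form you state it, vacuous. Write $t_a=\sigma_2(a)/\sigma_1(a)$. From Lemma \ref{lm1}, the reduction domain $\mathcal{F}_K$ is exactly the set with $t_a\in[u^{-2},u^{2}]$, and the action $a\mapsto au^{2}$ multiplies $t_a$ by $u^{-4}$ (since $\sigma_2(u^2)=u^{-2}$). Hence the interval $I$ you construct has multiplicative length $\beta/\alpha=u^{4}$ for \emph{every} real quadratic field: your test ``$I$ contains a full fundamental domain, i.e.\ $\beta/\alpha\ge u^4$'' holds with equality always and distinguishes nothing. The fact that every totally positive $a$ has a reduced representative is automatic and is already noted in the paper; the actual content of unit reducibility is the separate assertion that for $a\in\mathcal{F}_K$ one has $\mu(a)=\trace_{K/\mathbb{Q}}(a)$, attained at $x=1$. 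You invoke exactly this as a ``standard fact true for real quadratic fields,'' but it is precisely the statement being characterized, and it is \emph{false} outside the four families (that is what ``not unit reducible'' means: some reduced $a$ has a non-unit $x$ with $\trace_{K/\mathbb{Q}}(ax^2)<\trace_{K/\mathbb{Q}}(a)$). So the proposal assumes the theorem in one direction and has no mechanism for the other.

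What is missing, and what the paper supplies, are two genuinely quadratic-form-theoretic inputs. For sufficiency, one takes $a\in\mathcal{F}_K$, passes to the rational binary trace form, performs the unimodular substitution $x_1\mapsto x_1-kx_2$ with $k$ the nearest integer to $da_2/a_1$ (resp.\ $(a_1+da_2)/2a_1$), and verifies, using the explicit fundamental units of $T1$--$T4$ together with inequality \eqref{ineq1}, that the resulting binary form is Lagrange--Gauss reduced ($|f_{12}|\le\min\{f_{11},f_{22}\}$); a reduced binary form attains its minimum at $(1,0)$, which gives $\mu(a)=\trace_{K/\mathbb{Q}}(a)$. For necessity, one needs a device that produces a short non-unit value: the paper applies Minkowski's convex body theorem to the ellipse $\{\trace_{K/\mathbb{Q}}(ax^2)<\trace_{K/\mathbb{Q}}(a)\}$ for reduced $a$ on the boundary of $\mathcal{F}_K$, showing failure of unit reducibility whenever $\sqrt{\Delta_K}/(2\cosh R_K)<\pi/4$, and then checks by elementary Pell-type casework (norm $\pm1$ or $\pm4$, and $v_2=1$ versus $v_2>1$) that this inequality holds for every $d$ outside $T1$--$T4$. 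Your converse sketch gestures at ``$u^4$ grows faster than the length of $I$,'' but as noted these are identically equal, so the growth comparison cannot be repaired as stated; some volume or geometry-of-numbers argument (or an explicit non-unit minimal vector) is unavoidable there.
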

\begin{proof}
Let $a=a_1+a_2\sqrt{d}$ denote a totally positive element. Without loss of generality, we may assume that $a$ is reduced and that $a_2 \geq 0$, so
\begin{align}
    \frac{a_2}{a_1} \leq \frac{u_1-1}{du_2}, \label{ineq1}
\end{align}
where $u^2=u_1+u_2\sqrt{d}$, $u$ is the fundamental unit of $K$ satisfying $u>1$. The binary rational quadratic form generated by $q_a$ is given by
\begin{align*}
    \trace_{K/\mathbb{Q}}(a(x_1+x_2\sqrt{d})^2)=2(x_1^2+dx_2^2)a_1+4dx_1x_2a_2.
\end{align*}
Assume first that $K$ is of the type $T_1$ or $T_2$, i.e. $d \equiv 2,3 \mod 4$. Then we have
\begin{align*}
    \min_{x \in \mathcal{O}_K \setminus \{0\}} \trace_{K/\mathbb{Q}}(ax^2)= \min_{(0,0) \neq (x_1,x_2) \in \mathbb{Z}^2}\left\{2(x_1^2+dx_2^2)a_1+4dx_1x_2a_2\right\} \triangleq 2g(x_1,x_2).
\end{align*}
Now, set $h(x_1,x_2)=g(x_1-kx_2,x_2)$ where $k$ is the nearest rational integer to $da_2/a_1$. Then
\begin{align*}
    h(x_1,x_2)=a_1x_1^2+2(da_2-ka_1)x_1x_2+((k^2+d)a_1-2dka_2)x_2^2.
\end{align*}
Recall that a binary rational quadratic form $f(x_1,x_2)=f_{11}x_1^2+f_{12}x_1x_2+f_{22}x_2^2$ is reduced (in the sense of rational forms) if and only if $|f_{12}| \leq \min\{f_{11},f_{22}\}$. Since $2|da_2-ka_1| \leq a_1$ by the definition of $k$, the form $h$ is reduced if and only if
\begin{align}
    2|da_2-ka_1| \leq (k^2+d)a_1-2dka_2. \label{ineq2}
\end{align}
In turn, proving that $h$ is a reduced binary rational quadratic form for any values of $a_1,a_2$ (assuming that $a_1+\sqrt{d}a_2 \in \mathcal{F}_K$) will verify that $K$ is unit reducible, since the minimum of a reduced binary rational quadratic form $f(x_1,x_2)$ is attained at $f(1,0)$, and since $h(1,0)=g(1,0)=\frac{1}{2}\trace_{K/\mathbb{Q}}(a)$, $\trace_{K/\mathbb{Q}}(a)=\mu(a)$ for any reduced unary form $ax^2$.
\\
Suppose first that $d$ is of the type $T_1$, so $d= m^2+1$ where $m$ is an odd positive integer. Then the fundamental unit $u$ of $K$ satisfying $u>1$ is given by $u=m+\sqrt{d}$, so according to inequality \eqref{ineq1},
\begin{align*}
    \frac{a_2}{a_1} \leq \frac{(2m^2+1)-1}{2m(m^2+1)}=\frac{m}{m^2+1},
\end{align*}
and so
\begin{align*}
    &(k^2+d)a_1-2dka_2-2(da_2-ka_1)=(k^2+2k+m^2+1)a_1-2(m^2+1)(k+1)a_2\\&\geq \left(k^2+2k+m^2-2km\right)a_1=((k-m)^2+2k)a_1 \geq 0,
\end{align*}
since $k$ must necessarily be greater than or equal to zero by the assumption that $a_2 \geq 0$, which verifies that inequality \eqref{ineq2} holds, and so $h$ is a reduced rational binary quadratic form when $d$ is of the type $T_1$.
\\
Suppose that $d$ is of the type $T_2$, so $d=m^2-1$ for some positive even integer $m$. Then the fundamental unit $u$ of $K$ satisfying $u>1$ is given by $u=m+\sqrt{d}$, so according to inequality \eqref{ineq1},
\begin{align*}
    \frac{a_2}{a_1} \leq \frac{(2m^2-1)-1}{2m(m^2-1)}=\frac{1}{m},
\end{align*}
and so
\begin{align*}
    &(k^2+d)a_1-2dka_2-2(da_2-ka_1)=(k^2+m^2+2k-1)a_1-2(m^2-1)(k+1)a_2
    \\& \geq \left(k^2+m^2+2k-1-2(m^2-1)\frac{k+1}{m}\right)a_1=\left((k-m+1)^2+\frac{2}{m}(k-m+1)\right)a_1 \geq 0.
\end{align*}
This verifies that inequality \eqref{ineq2} holds, and so $h$ is a reduced rational binary quadratic form when $d$ is of the type $T_2$.
\\
When $d$ is of type $T_3,T_4$, we have $d \equiv 1 \mod 4$, and so $\mathcal{O}_K=\mathbb{Z}\left[\frac{1+\sqrt{d}}{2}\right]$. Then
\begin{align*}
    \min_{x \in \mathcal{O}_K} \trace_{K/\mathbb{Q}}(ax^2)=2\min_{(0,0) \neq (x_1,x_2) \in \mathbb{Z}^2}\left\{a_1x_1^2+(a_1+da_2)x_1x_2+\left(\frac{1+d}{4}a_1+\frac{d}{2}a_2\right)x_2^2\right\} \triangleq 2g(x_1,x_2)
\end{align*}
Then, setting $h(x_1,x_2)=g(x_1-kx_2,x_2)$ where $k$ is the nearest integer to $(a_1+da_2)/2a_1$,
\begin{align*}
    h(x_1,x_2)=a_1x_1^2+(a_1+da_2-2ka_1)x_1x_2+\left(\left(\frac{1+d}{4}+k^2\right)a_1+\frac{d}{2}a_2-k(a_1+da_2)\right)x_2^2.
\end{align*}
Following a similar argument to before, $h$ is a reduced rational binary quadratic form if and only if
\begin{align}
    a_1+da_2-2ka_1 \leq \left(\frac{1+d}{4}+k^2\right)a_1+\frac{d}{2}a_2-k(a_1+da_2), \label{ineq3}
\end{align}
and showing that $h$ is a reduced binary quadratic form for any $a \in \mathcal{F}_K$ corresponds to proving the field is unit reducible.
\\
Suppose that $d$ is of the type $T_3$, so $d=m^2+4$ for some positive odd integer $m$. Then if $m=2p+1$ for some $p \geq 0$, the fundamental unit $u$ of $K$ must be $u=p+\frac{1+\sqrt{d}}{2}$, so according to inequality \eqref{ineq1},
\begin{align*}
    \frac{a_2}{a_1} \leq \frac{\left(2p^2+2p+\frac{3}{2}\right)-1}{(4p^2+4p+5)\left(\frac{1}{2}+p\right)}=\frac{2p+1}{4p^2+4p+5},
\end{align*}
and so
\begin{align*}
    &\left(\frac{1+d}{4}+k^2\right)a_1+\frac{d}{2}a_2-k(a_1+da_2)-(a_1+da_2-2ka_1)\\&=\left(\frac{1+d}{4}+k^2+k-1\right)a_1-\left(-\frac{d}{2}+kd+d\right)a_2
    \\& \geq \left(\frac{4p^2+4p+6}{4}+k^2+k-1-\frac{4p^2+4p+5}{2}\frac{2p+1}{4p^2+4p+5}(2k+1)\right)a_1
    \\&=\left(\frac{4p^2+4p+6}{4}+k^2+k-1-\frac{2p+1}{2}(2k+1)\right)a_1=(p-k)^2a_1 \geq 0.
\end{align*}
This verifies that inequality \eqref{ineq3} holds, and so $h$ is a reduced binary quadratic form when $d$ is of the type $T_3$.
\\
Suppose that $d$ is of the type $T_4$, so $d=m^2-4$ for some positive odd integer, $m>3$. Then if $m=2p+1$ for some $p \geq 2$, the fundamental unit of $K$ must be $u=p+\frac{1+\sqrt{d}}{2}$, and so according to inequality \eqref{ineq1},
\begin{align*}
    \frac{a_2}{a_1} \leq \frac{\frac{1}{2}(4p^2+4p-1)-1}{(4p^2+4p-3)\frac{1}{2}(2p+1)}=\frac{1}{2p+1},
\end{align*}
and so
\begin{align*}
    &\left(\frac{1+d}{4}+k^2\right)a_1+\frac{d}{2}a_2-k(a_1+da_2)-(a_1+da_2-2ka_1)
    \\&=\left(\frac{1+d}{4}+k^2+k-1\right)a_1-\left(-\frac{d}{2}+kd+d\right)a_2
    \\& \geq \left(\frac{4p^2+4p-2}{4}+k^2+k-1-\frac{4p^2+4p-3}{2}\frac{1}{2p+1}\left(2k+1\right)\right)a_1
    \\&=\left((k-p)^2+\frac{4}{2p+1}(k-p)\right)a_1 \geq 0,
\end{align*}
since $p>1$. This verifies that inequality \eqref{ineq3} holds, and so $h$ is a reduced binary quadratic form when $d$ is of type $T_4$.
\\
It finally remains to show that $K$ is not unit reducible if $d$ is none of these types. Denote by $R_K$ the regulator of the field $K$. We claim that if 
\begin{align}
    \frac{\sqrt{\Delta_K}}{2\cosh(R_K)}<\frac{\pi}{4}, \label{ineq4}
\end{align}
then the field is not unit reducible. In order to show this claim holds, we will make use of the following lemma.
\begin{lemma}
Let $a=a_1+a_2\sqrt{d} \in K_{>>0}$ be an arbitrary totally positive element. Then $a \in \mathcal{F}_K$ if and only if the inequality
\begin{align}
    \frac{|a_2|}{a_1} \leq \frac{\tanh(R_K)}{\sqrt{d}} \label{ineq5}
\end{align}
holds.
\end{lemma}
\begin{proof}
By Lemma \ref{lm1} and the definition of the regulator of a real quadratic number field, $a$ is reduced if and only if
\begin{align*}
    &(a_1+a_2\sqrt{d})e^{2R_K}+(a_1-a_2\sqrt{d})e^{-2R_K} \geq a_1+a_2\sqrt{d}+a_1-a_2\sqrt{d}=2a_1,
    \\&(a_1+a_2\sqrt{d})e^{-2R_K}+(a_1-a_2\sqrt{d})e^{2R_K} \geq a_1+a_2\sqrt{d}+a_1-a_2\sqrt{d}=2a_1.
\end{align*}
These inequalities imply
\begin{align*}
    &a_1(\cosh(2R_K)-1)+a_2\sqrt{d}\sinh(2R_K) \geq 0,
    \\&a_1(\cosh(2R_K)-1)-a_2\sqrt{d}\sinh(2R_K) \geq 0,
\end{align*}
and so
\begin{align*}
    a_1(\cosh(2R_K)-1) \geq |a_2|\sqrt{d}\sinh(2R_K).
\end{align*}
The inequality in the statement of the lemma immediately follows.
\end{proof}
Suppose first that $K=\mathbb{Q}(\sqrt{d})$ for $d \equiv 2,3 \mod 4$. Then the associated rational quadratic form $g$ to the unary form generated by $a=a_1+a_2\sqrt{d}$ can be given by
\begin{align*}
    g(x_1,x_2) \triangleq \frac{1}{2}\trace_{K/\mathbb{Q}}(ax^2)=a_1(x_1^2+dx_2^2)+2da_2x_1x_2.
\end{align*}
If $K$ were not unit reducible, then there would exist a totally positive element $a$ satisfying
\begin{align*}
    \trace_{K/\mathbb{Q}}(ax^2)<\trace_{K/\mathbb{Q}}(a),
\end{align*}
for some nonzero $x \in \mathcal{O}_K$. In terms of the rational form $g$, this means that
\begin{align*}
    a_1(x_1^2+dx_2^2)+2da_2x_1x_2<a_1,
\end{align*}
for some nonzero integer lattice point $(x_1,x_2) \in \mathbb{Z}^2$. This can be rewritten as
\begin{align}
    x_1^2+dx_2^2+2d\frac{a_2}{a_1}x_1x_2<1.\label{ineq6}
\end{align}
Define the convex body $\mathcal{S}$ by
\begin{align*}
    \mathcal{S}=\left\{(y_1,y_2) \in \mathbb{R}^2: y_1^2+dy_2^2+2\frac{a_2d}{a_1}y_1y_2<1\right\}.
\end{align*}
It is clear that the set $\mathcal{S}$ is bounded, convex and symmetric about the origin. By Minkowski's convex body theorem, $\mathcal{S}$ contains a non-trivial integer solution if
\begin{align}
    \vol(\mathcal{S})>2^2 \text{disc}(\mathbb{Z}^2), \label{ineq7}
\end{align}
where $\text{disc}(\mathbb{Z}^2)$ denotes the volume or discriminant of the integer lattice $\mathbb{Z}^2$. Since
\begin{align*}
    \vol(\mathcal{S})=\frac{\pi}{\sqrt{d\left(1-\frac{a_2^2d}{a_1^2}\right)}},
\end{align*}
and $\text{disc}(\mathbb{Z}^2)=1$, we can write \eqref{ineq7} as
\begin{align*}
    \frac{\pi}{\sqrt{d\left(1-\frac{a_2^2d}{a_1^2}\right)}}>4.
\end{align*}
This yields
\begin{align*}
    \frac{\pi}{4}>\sqrt{d\left(1-\frac{a_2^2d}{a_1^2}\right)}.
\end{align*}
If $a \in \mathcal{F}_K$, then
\begin{align*}
    \frac{a_2^2}{a_1^2}\leq \frac{\tanh(R_K)^2}{d},
\end{align*}
and
\begin{align*}
    \sqrt{d\left(1-\frac{a_2^2d}{a_1^2}\right)} \geq \sqrt{d}\sqrt{1-\tanh(R_K)^2}=\frac{\sqrt{d}}{\cosh(R_K)}=\frac{\sqrt{\Delta_K}}{2\cosh(R_K)}.
\end{align*}
Now, suppose that $d \equiv 1 \mod 4$. Let $a=a_1+a_2\sqrt{d} \in K_{>>0}$ denote a totally positive element of $K=\mathbb{Q}(\sqrt{d})$ as before. Then the associated rational quadratic form $g$ to the unary form generated by $a$ can be given by
\begin{align*}
    g(x_1,x_2)\triangleq \frac{1}{2}\trace_{K/\mathbb{Q}}(ax^2)=a_1\left[\left(x_1+\frac{a_1+a_2d}{2a_1}x_2\right)^2+\frac{d}{4}\left(1-\frac{a_2^2d}{a_1^2}\right)x_2^2\right].
\end{align*}
Once again, we define the convex body
\begin{align*}
    \mathcal{S}=\{(y_1,y_2) \in \mathbb{R}^2: g(y_1,y_2)<a_1\}.
\end{align*}
Again, it is clear that the set $\mathcal{S}$ is bounded, convex and symmetric about the origin, and hence it contains a nontrivial lattice point in $\mathbb{Z}^2$ if
\begin{align}
    \vol(\mathcal{S})>2^2\text{disc}(\mathbb{Z}^2)=4. \label{ineq8}
\end{align}
The volume of $\mathcal{S}$ is given by
\begin{align*}
    \vol(\mathcal{S})=\frac{\pi}{\sqrt{\frac{d}{4}\left(1-\frac{a_2^2d}{a_1^2}\right)}},
\end{align*}
and so if $a \in \mathcal{F}_K$, then
\begin{align*}
    \sqrt{\frac{d}{4}\left(1-\frac{a_2^2d}{a_1^2}\right)} \geq \frac{\sqrt{d}}{2}\sqrt{1-\tanh(R_K)^2}=\frac{\sqrt{\Delta_K}}{2\cosh(R_K)},
\end{align*}
and so if inequality \eqref{ineq8} holds,
\begin{align*}
    \frac{\pi}{4} > \frac{\sqrt{\Delta_K}}{2\cosh(R_K)},
\end{align*}
as required.
\\
We have therefore verified that if inequality \eqref{ineq4} holds, then the field $K=\mathbb{Q}(\sqrt{d})$ is not unit reducible, for any square-free positive integer $d$. Let $u=v_1+v_2\sqrt{d}>1$ denote the fundamental unit of $K$. Then
\begin{align*}
    2\cosh(R_K)=e^{R_K}+e^{-R_K}=v_1+v_2\sqrt{d}+\frac{1}{v_1+v_2\sqrt{d}}=\begin{cases}
    &2v_1, \hspace{2mm} \text{if} \hspace{2mm} \norm_{K/\mathbb{Q}}(u)=1,
    \\
    &2v_2\sqrt{d} \hspace{2mm} \text{if} \hspace{2mm} \norm_{K/\mathbb{Q}}(u)=-1
    \end{cases},
\end{align*}
so that inequality \eqref{ineq4} can be rewritten as
\begin{align}
    \frac{\pi}{4}>\begin{cases}
    \frac{\sqrt{d}}{2v_1}, \hspace{2mm} &\text{if} \hspace{2mm} d \equiv 1 \mod 4, \hspace{1mm} \norm_{K/\mathbb{Q}}(u)=1;
    \\ \frac{\sqrt{d}}{2v_2\sqrt{d}}=\frac{1}{2v_2}, \hspace{2mm} &\text{if} \hspace{2mm} d \equiv 1 \mod 4, \hspace{1mm} \norm_{K/\mathbb{Q}}(u)=-1;
    \\ \frac{2\sqrt{d}}{2v_1}=\frac{\sqrt{d}}{v_1}, \hspace{2mm} &\text{if} \hspace{2mm} d \equiv 2,3 \mod 4, \hspace{1mm} \norm_{K/\mathbb{Q}}(u)=1;
    \\ \frac{2\sqrt{d}}{2v_2\sqrt{d}}=\frac{1}{v_2}, \hspace{2mm} &\text{if} \hspace{2mm} d \equiv 2,3 \mod 4, \hspace{1mm} \norm_{K/\mathbb{Q}}(u)=-1.
    \end{cases}\label{ineq9}
\end{align}
Suppose first that $d \equiv 2,3 \mod 4$. If $\norm_{K/\mathbb{Q}}(u)=v_1^2-dv_2^2=-1$ and $v_2=1$, then $d=v_1^2+1$ for some integer $v_1$. Clearly then $d \equiv 2 \mod 4$, but this would imply that $d$ is of the type $T_1$, for which we have already shown $\mathbb{Q}(\sqrt{d})$ is unit reducible, hence we must have $v_2>1$. Then
\begin{align*}
    \frac{1}{v_2} \leq \frac{1}{2}<\frac{\pi}{4},
\end{align*}
which verifies inequality \eqref{ineq9}, and so $K$ must not be unit reducible. Now, assume that $\norm_{K/\mathbb{Q}}(u)=v_1^2-dv_2^2=1$. Similarly, if $v_2=1$ then $d=v_1^2-1$, and so $d \equiv 3 \mod 4$ which implies that $d$ is of type $T_2$, for which we have already shown that $\mathbb{Q}(\sqrt{d})$ is unit reducible, hence we must have $v_2>1$. Then
\begin{align*}
    v_1^2=1+v_2^2d > 4d,
\end{align*}
and so
\begin{align*}
    \frac{\sqrt{d}}{u_1} <\frac{\sqrt{d}}{2\sqrt{d}}=\frac{1}{2}<\frac{\pi}{4},
\end{align*}
which verifies inequality \eqref{ineq9}, and so $K$ must not be unit reducible, which covers all cases for $d \equiv 2,3 \mod 4$.
\\
Now suppose that $d \equiv 1 \mod 4$. Note that we must have $v_1,v_2 \in \frac{1}{2}\mathbb{Z}$, so we may express $v_1=\frac{V_1}{2}, v_2=\frac{V_2}{2}$ for some $V_1,V_2 \in \mathbb{Z}$. If we assume that $\norm_{K/\mathbb{Q}}(u)=-1$, then
\begin{align*}
    V_1^2-dV_2^2=-4.
\end{align*}
If $V_2=1$, then $d=V_1^2+4$ which would imply that $d$ is of type $T_3$, for which we have already shown $\mathbb{Q}(\sqrt{d})$ is unit reducible, and so we assume that $V_2>1$. Then
\begin{align*}
    \frac{1}{2v_2}=\frac{1}{V_2} \leq \frac{1}{2}<\frac{\pi}{4},
\end{align*}
which verifies inequality \eqref{ineq9}, and so $K$ must not be unit reducible. Now, assume that $\norm_{K/\mathbb{Q}}(u)=1$ so that
\begin{align*}
    V_1^2-dV_2^2=4.
\end{align*}
If $V_2=1$, then $d=V_1^2-4$ which would imply that $d$ is of type $T_4$, for which we have already shown $\mathbb{Q}(\sqrt{d})$ is unit reducible, and so we assume that $V_2>1$. Then
\begin{align*}
    V_1^2=4+dV_2^2>4d,
\end{align*}
and so
\begin{align*}
    \frac{\sqrt{d}}{2v_1}=\frac{\sqrt{d}}{V_1}<\frac{\sqrt{d}}{2\sqrt{d}}=\frac{1}{2}<\frac{\pi}{4},
\end{align*}
which verifies inequality \eqref{ineq9}, and so $K$ must not be unit reducible. This covers every case, which verifies the statement of the theorem.
\end{proof}
\subsection{Simplest cubic fields}
Let $K$ be a cubic number field with defining polynomial $P(x)$. $K$ is said to be a \emph{simplest cubic field} if $P(x)$ has the form
\begin{align*}
    P(x)=P_t(x)=x^3-tx^2-(t+3)x-1,
\end{align*}
where $t$ is some integer, and we denote by $K_t=\mathbb{Q}(\theta_t)$. where $\theta_t$ satisfies $P_t(\theta_t)=0$. Without loss of generality, we may assume that $t \geq -1$, since $K_t=K_{-(t+3)}$. The following facts are well-known:
\begin{itemize}
    \item $K_t/\mathbb{Q}$ is a cyclic cubic extension.
    \item $\theta_t$ is a unit, satisfying $\sigma_1(\theta_t)=\theta_t,\sigma_2(\theta_t)=-\frac{1+\theta_t}{\theta_t}$, $\sigma_3(\theta_t)=\sigma_2^2(\theta_t)=\frac{-1}{1+\theta_t}$, where $\sigma_i \in \text{Gal}(K_t/\mathbb{Q})$, which is a cyclic Galois group.
\end{itemize}
Denote by $\sqrt{\Delta_t} \triangleq t^2+3t+9$ and $\mathcal{O}_{K_t}$ the ring of integers of $K_t$. It is known that $\mathcal{O}_{K_t}=\mathbb{Z}[\theta_t]$ if and only if $\Delta_t$ is equal to the discriminant of $K_t$ \cite{kas20}. From now on, we will denote by $K=K_t$, $\theta=\theta_t$ where the context is clear, unless we need to clarify the value of $t$ for brevity. We will also only be focusing on the case where $\mathcal{O}_K=\mathbb{Z}[\theta_t]$. We will now prove the following theorem.
\begin{theorem}
Suppose that $K$ is a simplest cubic field, and that $\mathcal{O}_K=\mathbb{Z}[\theta]$. Then $K$ is unit reducible.
\end{theorem}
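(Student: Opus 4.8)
The plan is to follow the template of the real‑quadratic argument: reduce to \emph{reduced} unary forms, describe the reduction domain $\mathcal{F}_K$ by finitely many linear inequalities, and then run a Minkowski‑reduction argument for positive ternary forms. First, note that both $\mu(a)$ and $\min_{u\in\mathcal{O}_K^\times}\trace_{K/\mathbb{Q}}(au^2)$ are unchanged when $a$ is replaced by $av^2$ with $v\in\mathcal{O}_K^\times$, since $x\mapsto v^{-1}x$ is a bijection of $\mathcal{O}_K$; as every totally positive $a$ is $(\mathcal{O}_K^\times)^2$‑equivalent to a reduced one, it suffices to prove the identity for reduced $a$. For reduced $a$ the definition already gives $\trace_{K/\mathbb{Q}}(au^2)\ge\trace_{K/\mathbb{Q}}(a)$ for every unit $u$, so unit reducibility for $K$ amounts to showing that, for every reduced $a\in K_{>>0}$, the rational ternary trace form $q_a(x_1,x_2,x_3)=\trace_{K/\mathbb{Q}}(a(x_1+x_2\theta+x_3\theta^2)^2)$ satisfies $q_a(x_1,x_2,x_3)\ge q_a(1,0,0)=\trace_{K/\mathbb{Q}}(a)$ on $\mathbb{Z}^3\setminus\{0\}$, i.e. $\mu(a)=\trace_{K/\mathbb{Q}}(a)$.

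Second, I would pin down $\mathcal{F}_K$. One checks from the given description of the conjugates that $\theta$ and $1+\theta$ are units, with $\sigma_2(1+\theta)=-1/\theta$, $\sigma_3(1+\theta)=\theta/(1+\theta)$ and $\norm_{K/\mathbb{Q}}(1+\theta)=-1$; these two generate $\mathcal{O}_K^\times$ modulo $\{\pm1\}$ (up to an index that must be controlled, at least when $\mathcal{O}_K=\mathbb{Z}[\theta]$). Inspecting the logarithmic embeddings of $\theta$ and $1+\theta$ shows that the units lying ``closest to $1$'' in the unit lattice — equivalently, the units whose squares are the Voronoi‑relevant vectors determining the Dirichlet domain — are $\theta^{\pm1}$, $(1+\theta)^{\pm1}$ and $(\theta(1+\theta)^{-1})^{\pm1}$. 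As in Lemma \ref{lm1} (which is the rank‑one instance of this phenomenon), reducedness need only be tested against these six units, so $\mathcal{F}_K$ is the cone inside $K_{>>0}$ cut out by the six inequalities $\trace_{K/\mathbb{Q}}(a)\le\trace_{K/\mathbb{Q}}(aw^2)$. Writing $a=a_0+a_1\theta+a_2\theta^2$ and expanding each such trace through the power sums $s_k=\trace_{K/\mathbb{Q}}(\theta^k)$ (which satisfy $s_k=ts_{k-1}+(t+3)s_{k-2}+s_{k-3}$) converts these into explicit linear inequalities in $(a_0,a_1,a_2)$ with coefficients polynomial in $t$.

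Third, with $\mathcal{F}_K$ known I would carry out the ternary analogue of the step performed in the quadratic proof. Put $g=\tfrac12 q_a$, compute its Gram matrix in the basis $1,\theta,\theta^2$ (again via $s_0,\dots,s_4$), and form $h(x_1,x_2,x_3)=g(x_1-k_2x_2-k_3x_3,\,x_2-l_3x_3,\,x_3)$, choosing the integers $l_3,k_3,k_2$ successively as the nearest integers to the ratios that reduce, in this order, the $x_2x_3$‑, the $x_1x_3$‑ and the $x_1x_2$‑coefficients. These unimodular shifts fix $(1,0,0)$, so $h(1,0,0)=g(1,0,0)=\tfrac12\trace_{K/\mathbb{Q}}(a)$ and, by the choices made, $2|h_{12}|\le h_{11}$, $2|h_{13}|\le h_{11}$, $2|h_{23}|\le h_{22}$ hold automatically. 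It then remains to verify, for every $a\in\mathcal{F}_K$, the remaining Minkowski conditions for a reduced positive ternary form: the diagonal ordering $h_{11}\le h_{22}\le h_{33}$ and the ``mixed'' inequalities $h_{11}+h_{22}\pm2h_{12}\pm2h_{13}\pm2h_{23}\ge0$. Since a Minkowski‑reduced positive ternary form attains its nonzero minimum at $(1,0,0)$ with value $h_{11}$, this gives $\mu(a)=2h(1,0,0)=\trace_{K/\mathbb{Q}}(a)$, hence unit reducibility.

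The hard part is this last verification, uniformly in $t$. Each Minkowski condition for $h$ becomes a polynomial inequality in $a_0,a_1,a_2,t$, and the task is to exhibit it as a nonnegative combination of the defining inequalities of $\mathcal{F}_K$ together with manifestly nonnegative terms; the crude geometry (the Dirichlet domain widens like $\log t$) shows that some of these inequalities are nearly tight along the boundary rays of the cone, so — exactly as the quadratic proof produces clean identities such as $(k-m)^2 a_1\ge 0$ — the algebra must be organized so that each slack collapses to an obvious sum of products of nonnegative quantities, with at worst finitely many small values of $t$ treated separately. A subsidiary obstacle is the unit‑group bookkeeping: one must confirm that for all admissible $t$ (those with $\mathcal{O}_K=\mathbb{Z}[\theta]$) the six units listed above really do suffice to characterise $\mathcal{F}_K$, i.e. that $\langle-1,\theta,1+\theta\rangle$ captures the Voronoi‑relevant part of $\mathcal{O}_K^\times$.
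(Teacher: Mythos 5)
Your proposal is a plan rather than a proof: both of its load-bearing steps are deferred. First, the claim that membership in $\mathcal{F}_K$ can be tested against only the six units $\theta^{\pm1},(1+\theta)^{\pm1},(\theta(1+\theta)^{-1})^{\pm1}$ is asserted ``as in Lemma \ref{lm1}'' and then conceded to be an obstacle to be controlled later; but Lemma \ref{lm1} concerns a unit group of rank one, and in the cubic case this finiteness statement is precisely what must be proved. The paper does so (Lemma \ref{lm4}) by writing an arbitrary unit as $\pm\theta^k\sigma_2(\theta)^l$ and running an induction on $(k,l)$ using the trace recursion coming from $\theta^3=t\theta^2+(t+3)\theta+1$, together with the Galois action to cover the remaining sign patterns of $(k,l)$. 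Without that argument you have no finite description of $\mathcal{F}_K$ on which to base anything that follows.

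Second, and more seriously, the step that would actually prove the theorem --- verifying, uniformly in $t$ and for every $a\in\mathcal{F}_K$, that your sheared ternary form satisfies the reduction conditions --- is not carried out at all; you only describe how one might organize the algebra, and you acknowledge it is the hard part. It is also not a routine extension of the quadratic computation: the shear integers now depend on three $a$-dependent ratios, the ternary conditions include the diagonal ordering $h_{11}\le h_{22}\le h_{33}$ and the mixed inequalities, and Minkowski-reducedness of your particular sheared basis is only a \emph{sufficient} condition for $\mu(a)=\trace_{K/\mathbb{Q}}(a)$; there is no a priori guarantee that nearest-integer shears yield a Minkowski-reduced form for every reduced $a$, so the plan as stated could stall even though the theorem is true. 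The paper avoids this uniform three-variable verification by exploiting polyhedrality: having cut out $\mathcal{F}_K$ by finitely many linear inequalities, it computes the extreme rays $r_1,r_2,r_3,s_1,s_2,s_3$ of the cone, checks by explicit completion of squares that the forms attached to $r_1$ and $s_1$ attain their minima $\trace_{K/\mathbb{Q}}(r_1)$ and $\trace_{K/\mathbb{Q}}(s_1)$ at units (Galois invariance of the sets $R,S$ handles the other rays), and then concludes for general $a=\sum_i\lambda_i\rho_i$ with $\lambda_i\ge 0$ because $\trace_{K/\mathbb{Q}}(ax^2)-\trace_{K/\mathbb{Q}}(a)$ is then a nonnegative combination of nonnegative quantities. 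Reducing the problem to a finite check at the extreme rays is the key idea missing from your outline; without it, or a completed uniform verification of your Minkowski inequalities, the proposal does not establish the result.
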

\begin{proof}
First, we will prove the following useful lemma.
\begin{lemma}\label{lm4}
Let $a$ denote an arbitrary totally positive element in $K$. Then $a \in \mathcal{F}_K$ if and only if
\begin{align}
    \trace_{K/\mathbb{Q}}(a) \leq \trace_{K/\mathbb{Q}}(a\sigma_2^i(\theta)^2), \label{ineq10}
\end{align}
for $i \in \{0,1,2\}$.
\end{lemma}
\begin{proof}
Since $\pm 1, \theta, \sigma_2(\theta)$ generate the unit group, and unit may be rewritten as $u=\pm \theta^k \sigma_2(\theta^l)$ for some $k,l \in \mathbb{Z}$. First, let's assume that $l=0$. If $k=2$, then
\begin{align*}
    &\trace_{K/\mathbb{Q}}(u^2a)=\trace_{K/\mathbb{Q}}(\theta^4 a)
    \\&=(t^2+t+3)\trace_{K/\mathbb{Q}}(\theta^2 a)+(t^2+3t+1)\trace_{K/\mathbb{Q}}(\theta a)+t\trace_{K/\mathbb{Q}}(a)
    \\&=\frac{1}{2}(t^2+3t+1)\trace_{K/\mathbb{Q}}((1+\theta)^2a)+\frac{1}{2}(t^2-t+5)\trace_{K/\mathbb{Q}}(\theta^2 a)-\frac{1}{2}(t^2+t+1)\trace_{K/\mathbb{Q}}(a)
    \\& =\frac{1}{2}(t^2+3t+1)\trace_{K/\mathbb{Q}}(\theta^2 \sigma_2(\theta)^2 a)+\frac{1}{2}(t^2-t+5)\trace_{K/\mathbb{Q}}(\theta^2 a)-\frac{1}{2}(t^2+t+1)\trace_{K/\mathbb{Q}}(a)
    \\& \geq \trace_{K/\mathbb{Q}}(\theta^2 a),
\end{align*}
given the assumption that $t$ is positive, and assuming inequality \eqref{ineq10} holds.
Now, assume that we have $\trace_{K/\mathbb{Q}}(\theta^{2(k-1)}a) \leq \trace_{K/\mathbb{Q}}(\theta^{2k}a)$ for some $k \geq 2$. Then
\begin{align*}
    &\trace_{K/\mathbb{Q}}(\theta^{2k+2}a)=\trace_{K/\mathbb{Q}}(\theta^{2k-1}(t\theta^2+(t+3)\theta+1)a)
    \\&=t\trace_{K/\mathbb{Q}}(\theta^{2k+1}a)+(t+3)\trace_{K/\mathbb{Q}}(\theta^{2k}a)+\trace_{K/\mathbb{Q}}(\theta^{2k-1}a)
    \\&=\frac{1}{2}t\trace_{K/\mathbb{Q}}(\theta^{2k}(1+\theta)^2a)-\frac{1}{2}t\trace_{K/\mathbb{Q}}(\theta^{2(k+1)} a)+\frac{1}{2}(t+5)\trace_{K/\mathbb{Q}}(\theta^{2k} a)\\&-\frac{1}{2}\trace_{K/\mathbb{Q}}(\theta^{2(k-1)}a)+\frac{1}{2}\trace_{K/\mathbb{Q}}(\theta^{2(k-1)}(1+\theta)^2a)
    \\ & \geq \frac{1}{2}t\trace_{K/\mathbb{Q}}(\theta^{2k}(1+\theta)^2a)-\frac{1}{2}t\trace_{K/\mathbb{Q}}(\theta^{2(k+1)} a)+\frac{1}{2}(t+4)\trace_{K/\mathbb{Q}}(\theta^{2k} a)\\&+\frac{1}{2}\trace_{K/\mathbb{Q}}(\theta^{2(k-1)}(1+\theta)^2a),
\end{align*}
and so
\begin{align*}
    &\frac{1}{2}(t+2)\trace_{K/\mathbb{Q}}(\theta^{2k+2}a)\\& \geq \frac{1}{2}t\trace_{K/\mathbb{Q}}(\theta^{2k}(1+\theta)^2a)+\frac{1}{2}(t+4)\trace_{K/\mathbb{Q}}(\theta^{2k} a)+\frac{1}{2}\trace_{K/\mathbb{Q}}(\theta^{2(k-1)}(1+\theta)^2a)
    \\& \geq \frac{1}{2}(t+4)\trace_{K/\mathbb{Q}}(\theta^{2k} a),
\end{align*}
hence $\trace_{K/\mathbb{Q}}(\theta^{2k+2}a) \geq \trace_{K/\mathbb{Q}}(\theta^{2k} a)$ for all $k \geq 0$. Now, suppose that $u=\theta^k \sigma_2(\theta^l)$ for nonzero $k,l$. We may assume without loss of generality that $k \neq l$, as otherwise $\theta^{2k}\sigma_2(\theta)^{2l}=\sigma_3(\theta^{-1})^{2k}$, and treating this case is identical to before. First, note that
\begin{align*}
    &\trace_{K/\mathbb{Q}}(\theta^4\sigma_2(\theta)^2 a)=\trace_{K/\mathbb{Q}}((\theta^2+2\theta^3+\theta^4)a)
    \\&=(t^2+3t+4)\trace_{K/\mathbb{Q}}(\theta^2 a)+(t^2+4t+4)\trace_{K/\mathbb{Q}}(\theta a)+ (t+2)\trace_{K/\mathbb{Q}}(a)
    \\& =\frac{1}{2}(t^2+4t+4)\trace_{K/\mathbb{Q}}((1+\theta)^2a)+\frac{1}{2}(t^2+2t+4)\trace_{K/\mathbb{Q}}(\theta^2 a)-\frac{1}{2}(t^2+2t)\trace_{K/\mathbb{Q}}(a)
    \\& \geq \frac{1}{2}(t^2+4t+4)\trace_{K/\mathbb{Q}}((1+\theta)^2a)=\frac{1}{2}(t^2+4t+4)\trace_{K/\mathbb{Q}}(\theta^2 \sigma_2(\theta)^2 a) \geq 2\trace_{K/\mathbb{Q}}(\theta^2 \sigma_2(\theta)^2 a),
\end{align*}
given the assumption that $t$ is positive and $\trace_{K/\mathbb{Q}}(\theta^2 a) \geq \trace_{K/\mathbb{Q}}(a)$. Now, assume that $k>l \geq 1$, and that
\begin{align*}
    \trace_{K/\mathbb{Q}}(\theta^{2k}\sigma_2(\theta)^{2l} a) \geq \trace_{K/\mathbb{Q}}(\theta^{2(k-1)}\sigma_2(\theta)^{2l}a).
\end{align*}
Then
\begin{align*}
    &\trace_{K/\mathbb{Q}}(\theta^{2(k+1)}\sigma_2(\theta)^{2l}a)
    \\&=\dots = \frac{1}{2}t\trace_{K/\mathbb{Q}}(\theta^{2k}(1+\theta)^2\sigma_2(\theta)^{2l}a)-\frac{1}{2}t\trace_{K/\mathbb{Q}}(\theta^{2(k+1)}\sigma_2(\theta)^{2l} a)+\frac{1}{2}(t+5)\trace_{K/\mathbb{Q}}(\theta^{2k}\sigma_2(\theta)^{2l} a)\\&-\frac{1}{2}\trace_{K/\mathbb{Q}}(\theta^{2(k-1)}\sigma_2(\theta)^{2l}a)+\frac{1}{2}\trace_{K/\mathbb{Q}}(\theta^{2(k-1)}(1+\theta)^2\sigma_2(\theta)^{2l}a)
    \\& \geq \frac{1}{2}t\trace_{K/\mathbb{Q}}(\theta^{2k}(1+\theta)^2\sigma_2(\theta)^{2l}a)-\frac{1}{2}t\trace_{K/\mathbb{Q}}(\theta^{2(k+1)}\sigma_2(\theta)^{2l} a)+\frac{1}{2}(t+4)\trace_{K/\mathbb{Q}}(\theta^{2k}\sigma_2(\theta)^{2l} a)\\&+\frac{1}{2}\trace_{K/\mathbb{Q}}(\theta^{2(k-1)}(1+\theta)^2\sigma_2(\theta)^{2l}a),
\end{align*}
and so
\begin{align*}
    &\frac{1}{2}(t+2)\trace_{K/\mathbb{Q}}(\theta^{2k+2}\sigma_2(\theta)^{2l}a)\\& \geq \frac{1}{2}t\trace_{K/\mathbb{Q}}(\theta^{2k}(1+\theta)^2\sigma_2(\theta)^{2l}a)+\frac{1}{2}(t+4)\trace_{K/\mathbb{Q}}(\theta^{2k} \sigma_2(\theta)^{2l}a)+\frac{1}{2}\trace_{K/\mathbb{Q}}(\theta^{2(k-1)}(1+\theta)^2\sigma_2(\theta)^{2l}a)
    \\& \geq \frac{1}{2}(t+4)\trace_{K/\mathbb{Q}}(\theta^{2k}\sigma_2(\theta)^{2l} a),
\end{align*}
so $\trace_{K/\mathbb{Q}}(\theta^{2k+2}\sigma_2(\theta)^{2l}a) \geq \trace_{K/\mathbb{Q}}(\theta^{2k}\sigma_2(\theta)^{2l} a)$. We are now ready to prove that our system of inequalities is sufficient to show that $\trace_{K/\mathbb{Q}}(u^2a) \geq \trace_{K/\mathbb{Q}}(a)$ for all $u \in \mathcal{O}_K^\times$. Suppose that $u=\theta^k\sigma_2(\theta)^l$ for some integers $k,l$. It clearly holds that our method may be replicated for $\sigma_2(u)$ and $u^{-1}$ by a switch of variables. Therefore, by using the table below, we show that our system of inequalities \eqref{ineq10} fully defines the reduction domain.
\begin{center}
\centering
\begin{tabular}{||c c||}
\hline
Values of $(k,l)$                 & Inequality      \\[0.5ex] 
\hline
\hline 
$k$ positive, $l=0$               & $\trace_{K/\mathbb{Q}}(\theta^{2k}\theta^{2l}a) \geq \trace_{K/\mathbb{Q}}(\theta^{2(k-1)} \sigma_2(\theta)^{2l}a)$                                                                                               \\ 
$k$ positive, $l$ positive, $k=l$ & $\trace_{K/\mathbb{Q}}(\theta^{2k} \sigma_2(\theta)^{2l}a) \geq \trace_{K/\mathbb{Q}}(\theta^{2(k-1)} \sigma_2(\theta)^{2(l-1)}a$                                                                                          \\ 
$k$ positive, $l$ positive, $k>l$ & $\trace_{K/\mathbb{Q}}(\theta^{2k} \sigma_2(\theta)^{2l}a) \geq \trace_{K/\mathbb{Q}}(\theta^{2(k-1)} \sigma_2(\theta)^{2l}a)$                                                                                            \\ 
$k$ positive, $l$ positive, $k<l$ & \makecell{$\trace_{K/\mathbb{Q}}(\theta^{2k} \sigma_2(\theta)^{2l}a)=\trace_{K/\mathbb{Q}}(\theta^{-2(l-k)}\sigma_3(\theta)^{-2l}a)$  \\ $\geq \trace_{K/\mathbb{Q}}(\theta^{-2(l-k)} \sigma_2(\theta)^{-2(l-1)}a)$}                          \\ 
$k$ positive, $l$ negative        & \makecell{$\trace_{K/\mathbb{Q}}(\theta^{2k} \sigma_2(\theta)^{2l}a)=\trace_{K/\mathbb{Q}}( \sigma_2(\theta)^{-2(k+|l|)}\sigma_3(\theta)^{-2k}a)$ \\ $\geq \trace_{K/\mathbb{Q}}(\sigma_2(\theta)^{-2(k+|l|-1)} \sigma_3(\theta)^{-2k}a)$}     \\ 
\hline
\end{tabular}
\end{center}
\end{proof}
We have therefore shown that the system of inequalities \eqref{ineq10} is sufficient to fully define the reduction domain $\mathcal{F}_K$ of any simplest cubic field with ring of integers $\mathbb{Z}[\theta]$. Let $a=a_0+a_1\theta+a_2\sigma_2(\theta) \in K_{>>0}$ denote an arbitrary totally positive element of $K$. Then if $a \in \mathcal{F}_K$, it must hold that
\begin{align*}
    &(t^2+2t+3)a_0+(t^3+3t^2+8t+3)a_1-(t^2+4t+6)a_2 \geq 0,
    \\&(t^2+2t+3)a_0+(3-t)a_1+(t^3+3t^2+8t+3)a_2 \geq 0,
    \\&(t^2+2t+3)a_0-(t^2+4t+6)a_1+(3-t)a_2 \geq 0,
    \\&(t^2+4t+6)a_0+(t^3+5t^2+13t+15)a_1-(t^2+5t+12)a_2 \geq 0,
    \\&(t^2+4t+6)a_0-(2t+3)a_1+(t^3+5t^2+13t+15)a_2 \geq 0,
    \\&(t^2+4t+6)a_0-(t^2+5t+12)a_1-(2t+3)a_2 \geq 0.
\end{align*}
The rays representing this region are given by
\begin{align*}
    &r_1=3+\theta+(t+2)\sigma_2(\theta),
    \\&r_2=t^2+2t+3-(t+2)\theta-(t+1)\sigma_2(\theta),
    \\&r_3=t+3+(t+1)\theta-\sigma_2(\theta),
    \\&s_1=t^2+t+6 -t\theta+3\sigma_2(\theta),
    \\&s_2=t^2+4t+6-3\theta-(t+3)\sigma_2(\theta),
    \\&s_3=t+6+(t+3)\theta+t\sigma_2(\theta).
\end{align*}
It is clear that the sets $R=\{r_1,r_2,r_3\}$, $S=\{s_1,s_2,s_3\}$ are invariant under the action of the Galois group. For any $x=x_0+x_1\theta+x_2\sigma(\theta) \in \mathbb{Z}[\theta]$, we have
\begin{align*}
    \trace_{K/\mathbb{Q}}(r_1 x^2)&=(t^2+3t+9)\left(x_0^2+x_1^2+(t^2+2t+3)x_2^2+2(t+1)x_0x_2-2x_1x_2\right),
\end{align*}
for which the nonzero minimum value of the quadratic form above is clearly $t^2+3t+9$, which occurs at the points $(1,0,0)$, $(0,1,0)$ and $(-t,1,1)$, which all correspond to units.
\\
Likewise, we have
\begin{align*}
    \trace_{K/\mathbb{Q}}(s_1x^2)&=(t^2+3t+9)(2x_0^2+2x_1^2+(t^2+3t+5)x_2^2-2x_0x_1+2(t+2)x_0x_2-2x_1x_2)
    \\&=(t^2+3t+9)((x_0-x_1+x_2)^2+(x_0+(t+1)x_2)^2+x_1^2+(t+3)x_2^2),
\end{align*}
for which we can deduce that the nonzero minimum of the quadratic form is $2(t^2+3t+9)$, which occurs at the points $(1,0,0),(0,1,0)$, and the third successive minimum value is $(t^2+3t+9)(t^2+t+5)$, which occurs at the point $(-t,1,1)$. All of these vectors correspond to units in the ring of integers.
\\
In what follows is that $\trace_{K/\mathbb{Q}}(r_1 x^2)$ and 
$\trace_{K/\mathbb{Q}}\left(\frac{s_1}{2} x^2\right)$ have the same non-zero minimum. Obviously the unary forms $ r_1x^2$ and $\frac{s_1}{2}x^2$ are not equivalent, because 
\[
\frac{s_1}{2} = \frac{-3\theta^2 + 2t\theta + t^2 + 4t + 12}{2} \not\in \mathbb{Z}[\theta]
\] but $r_1\in \mathbb{Z}[\theta] $ and $r_1u^2 \in \mathbb{Z}[\theta]$ for all $u \in \mathbb{Z}[\theta]^\times$.

Now, since the groups $R,S$ are invariant under the action of the Galois group, it is immediately clear that the minima of the forms induced by $r_2,r_3,s_2,s_3$ also occur at units. Moreover, since every element in $\mathcal{F}_K$ can be represented by
\begin{align*}
    \lambda_1r_1+\lambda_2r_2+\lambda_3r_3+\lambda_4s_4+\lambda_5s_5+\lambda_6s_6,
\end{align*}
for $\lambda_i \geq 0$, $\sum_{i=1}^6 \lambda_i>0$, every element in $\mathcal{F}_K$ must attain their minima at unit values. Since every element in $K_{>>0}$ is equivalent to an element in $\mathcal{F}_K$, the proof of the theorem follows.
\end{proof}
\section{Unit Reducibility in General Totally Real Number Fields}
Throughout this section, for any totally real number field $K$, we will denote by $K_\mathbb{R}=K \otimes \mathbb{R}$. It is easy to see that $K_\mathbb{R} \cong \mathbb{R}^n$, if $[K:\mathbb{Q}]=n$. Though we have so far focused on specific families of number fields, we may also ascertain some results for arbitrary number fields.
\begin{definition}
A \emph{lattice} $\Lambda$ in $\mathbb{R}^n$ is a discrete subgroup of $\mathbb{R}^n$. If
\begin{align*}
    \Lambda=\bigoplus_{i=1}^m \mathbb{Z}\mathbf{b}_i,
\end{align*}
for some $\mathbf{b}_i \in \mathbb{R}^n$, $m \leq n$. We say that $\Lambda$ is full-rank if $m=n$, and we will assume that all lattices are full-rank from now on. We will denote by $\det(\Lambda)$ the absolute value of the determinant of the matrix composed of the basis vectors of $\Lambda$. Denote by
\begin{align*}
    J_p \triangleq \{\mathbf{x} \in \mathbb{R}^n: \|\mathbf{x}\|_p \leq 1\},
\end{align*}
where $\|\cdot\|_p$ denotes the $l_p$ norm of an element in $\mathbb{R}^n$. Then we define the $i$th successive minima with respect to the $l_p$-norm of a lattice $\Lambda$ by
\begin{align*}
    \lambda_i^{(p)}(\Lambda) \triangleq \argmin_{\lambda>0}\{\lambda J_p \hspace{1mm} \text{contains $i$ linearly independent lattice vectors.}\}
\end{align*}
When considering the $l_2$ norm, we will simply write $\lambda_i(\Lambda)$ instead of $\lambda_i^{(2)}(\Lambda)$.
\end{definition}

From Hadamard's inequality one can show the following lemma:
\begin{lemma}\label{lm2}
For all full-rank lattices $\Lambda$ in $\mathbb{R}^n$,
\begin{align*}
    \lambda_n(\Lambda) \geq \det(\Lambda)^{1/n}.
\end{align*}
\end{lemma}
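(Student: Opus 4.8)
The plan is to produce $n$ linearly independent lattice vectors that are all short (of Euclidean norm at most $\lambda_n(\Lambda)$), pass to the sublattice they generate, and compare covolumes using Hadamard's inequality.

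First I would invoke the standard fact that the successive minima are attained: there exist linearly independent vectors $\mathbf{v}_1,\dots,\mathbf{v}_n \in \Lambda$ with $\|\mathbf{v}_i\|_2 = \lambda_i(\Lambda)$ for each $i$. Since $\lambda_1(\Lambda) \leq \lambda_2(\Lambda) \leq \dots \leq \lambda_n(\Lambda)$, each of these vectors satisfies $\|\mathbf{v}_i\|_2 \leq \lambda_n(\Lambda)$. Next, set $\Lambda' = \bigoplus_{i=1}^n \mathbb{Z}\mathbf{v}_i$; this is a full-rank sublattice of $\Lambda$, so $\det(\Lambda') = [\Lambda:\Lambda']\cdot\det(\Lambda) \geq \det(\Lambda)$, the index being a positive integer. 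On the other hand, $\det(\Lambda')$ is the absolute value of the determinant of the matrix whose rows are $\mathbf{v}_1,\dots,\mathbf{v}_n$, and Hadamard's inequality bounds this by $\prod_{i=1}^n \|\mathbf{v}_i\|_2 \leq \lambda_n(\Lambda)^n$. Chaining these gives $\det(\Lambda) \leq \det(\Lambda') \leq \lambda_n(\Lambda)^n$, and taking $n$-th roots yields the claimed inequality.

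The only point requiring care — and really the only obstacle — is that the minimal vectors $\mathbf{v}_i$ need not form a $\mathbb{Z}$-basis of $\Lambda$ itself; in general they only generate a sublattice of finite index. This is precisely why we work with $\Lambda'$ and use the crude bound $[\Lambda:\Lambda'] \geq 1$; no finer information about the index is needed. Everything else reduces to the definition of the successive minima together with a single application of Hadamard's inequality, so the argument is short. (One may also note in passing that this is a weak, one-sided form of Minkowski's second theorem, but the elementary argument above suffices for our purposes.)
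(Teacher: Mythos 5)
Your proof is correct, and it follows a genuinely different route from the one in the paper. You pick linearly independent minimizers $\mathbf{v}_1,\dots,\mathbf{v}_n$ with $\|\mathbf{v}_i\|=\lambda_i(\Lambda)$, pass to the full-rank sublattice $\Lambda'=\bigoplus_i\mathbb{Z}\mathbf{v}_i$, use $\det(\Lambda')=[\Lambda:\Lambda']\det(\Lambda)\geq\det(\Lambda)$, and apply Hadamard's inequality directly to the $\mathbf{v}_i$, which in fact yields the stronger statement $\det(\Lambda)\leq\prod_i\lambda_i(\Lambda)\leq\lambda_n(\Lambda)^n$ (the easy half of Minkowski's second theorem). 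The paper instead stays inside $\Lambda$: it adjusts a basis $\mathbf{b}_1,\dots,\mathbf{b}_n$ (via a gcd-replacement step) so that each minimizer has the staircase form $\mathbf{v}_i=\sum_{j\leq i}x_j^{(i)}\mathbf{b}_j$ with $x_i^{(i)}\neq 0$, deduces $\lambda_i(\Lambda)\geq\|\mathbf{b}_i(i)\|$ for the Gram--Schmidt norms, and concludes $\lambda_n(\Lambda)^n\geq\max_i\|\mathbf{b}_i(i)\|^n\geq\prod_i\|\mathbf{b}_i(i)\|=\det(\Lambda)$, i.e.\ it uses the Gram--Schmidt factorization of the determinant rather than Hadamard applied to the minimizers. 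Your argument is shorter and avoids the delicate basis-modification step (one must check the replacement preserves the staircase shape for all minimizers simultaneously), at the price of invoking the index formula for full-rank sublattices; the paper's argument, in exchange, produces a basis whose Gram--Schmidt norms are dominated by the successive minima, a fact of independent use in reduction theory. The only point you should make explicit is that the successive minima are actually attained by lattice vectors (immediate from discreteness of $\Lambda$ and closedness of the balls $\lambda J_2$), but this is standard and not a gap.
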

\begin{proof}
Let $\mathbf{b}_1,\mathbf{b}_2,\dots,\mathbf{b}_n$ denote the basis for $\Lambda$, and denote by $\mathbf{b}_i(i)$ the vector $\mathbf{b}_i$ after being orthogonalised with respect to the space generated by $\mathbf{b}_1,\dots,\mathbf{b}_{i-1}$, for all $1<i \leq n$. Then
\begin{align*}
    \det(\Lambda)=\prod_{i=1}^n\|\mathbf{b}_i(i)\|,
\end{align*}
where $\mathbf{b}_1(1)=\mathbf{b}_1$. Assume that
\begin{align*}
    \mathbf{v}_i=\sum_{j=1}^n x_j^{(i)}\mathbf{b}_j,
\end{align*}
for some $x_j^{(i)} \in \mathbb{Z}$, where $\|\mathbf{v}_i\|=\lambda_i(\Lambda)$, for all $1 \leq i \leq n$. We may assume without loss of generality that the $\mathbf{b}_i$ are arranged so that the largest $j$ such that $x_j^{(i)}$ is nonzero is greater than or equal to $i$, otherwise we may readjust the order until this holds, since $\mathbf{v}_1,\mathbf{v}_2,\dots,\mathbf{v}_n$ must be linearly independent (e.g. we cannot have three $\mathbf{v}_i$ expressible as the nonzero integer sum of only two $\mathbf{b}_i$, or else they would not be linearly independent, which is a contradiction). Now, suppose that for some $i$ we have
\begin{align*}
    \mathbf{v}_i=\sum_{j=1}^kx_j^{(i)}\mathbf{b}_j,
\end{align*}
where $i<k\leq n$ and $x_k^{(i)} \neq 0$. Let $\gamma_i=\gcd(x_i^{(i)},x_{i+1}^{(i)},\dots,x_k^{(i)})$. Then it is possible to construct a new basis containing $\mathbf{b}_1,\dots,\mathbf{b}_{i-1},\frac{1}{\gamma_i}\sum_{j=1}^kx_j^{(i)}\mathbf{b}_j$. This argument can be recursively applied until we have a basis $\mathbf{b}_1,\dots,\mathbf{b}_n$ satisfying
\begin{align*}
    \mathbf{v}_i=\sum_{j=1}^ix_j^{(i)}\mathbf{b}_j,
\end{align*}
where $x_i^{(i)}$ is nonzero for all $1 \leq i \leq n$. In this way, it is easy to see that
\begin{align*}
    \|\mathbf{v}_i\|^2=\lambda_i(\Lambda)^2\geq {x_i^{(i)}}^2\|\mathbf{b}_i(i)\|^2\geq \|\mathbf{b}_i(i)\|^2,
\end{align*}
and so letting $\max_{i}\|\mathbf{b}_i(i)\|=\|\mathbf{b}_l(l)\|$, we have
\begin{align*}
    \lambda_n(\Lambda)^n \geq \lambda_l(\Lambda)^n \geq \|\mathbf{b}_l(l)\|^n\geq \prod_{i=1}^n \|\mathbf{b}_i(i)\|=\det(\Lambda).
\end{align*}
\end{proof}
\begin{lemma}\label{lm3}
Let $U_K$ denote the set of units such that $\trace_{K/\mathbb{Q}}(a)\leq \trace_{K/\mathbb{Q}}(u^2a) $ defines a non-redundant boundary of $\mathcal{F}_K$ for all $u \in \mathcal{F}_K$. Then the multiplicative group using the elements of $U_K$ as generators generates the entire unit group of $K$. 
\end{lemma}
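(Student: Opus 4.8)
The plan is to recognise $\mathcal{F}_K$ as a fundamental domain for the action of $\mathcal{O}_K^\times$ on the cone of totally positive elements, to observe that the units in $U_K$ are exactly the ``face pairings'' of this domain, and then to run a Poincar\'e-polyhedron-type connectedness argument: the face pairings of a convex fundamental domain generate the group. Concretely, work inside $K_\mathbb{R}=K\otimes\mathbb{R}$ and let $\mathcal{C}\subseteq K_\mathbb{R}$ be the open convex cone of totally positive elements. Each unit $u$ acts on $\mathcal{C}$ by the linear map $x\mapsto u^2x$ (with $u$ and $-u$ inducing the same map), and each defining inequality $\trace_{K/\mathbb{Q}}(x)\le\trace_{K/\mathbb{Q}}(u^2x)$ is linear in $x$; hence $\mathcal{F}_K$ (extended to $\mathcal{C}$) is a closed convex cone. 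By classical reduction theory it is finitely sided --- after the logarithmic map the unit action on the norm-one surface is by a rank-$(n-1)$ lattice of translations, the quotient is compact, and the trace-minimal fundamental domain is compact --- so $U_K$ is finite and the inequalities it indexes are exactly the facet-defining ones. One also checks that $\mathcal{F}_K$ is a fundamental domain: for $a\in\mathcal{C}$ the orbit $\{aw^2:w\in\mathcal{O}_K^\times\}$ is discrete and $\trace_{K/\mathbb{Q}}$ is proper on it (by AM--GM $\trace_{K/\mathbb{Q}}(aw^2)\ge n\,\norm_{K/\mathbb{Q}}(a)^{1/n}$, with finite sublevel sets), so the minimum over the orbit is attained; this gives the covering $\bigcup_u u^2\mathcal{F}_K=\mathcal{C}$, while an interior point of a translate is the unique trace-minimiser in its orbit, so distinct translates have disjoint interiors. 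The same estimate shows the tiling $\{u^2\mathcal{F}_K\}_u$ is locally finite.

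The key local step is a face-pairing lemma: if $c_1^2\mathcal{F}_K$ and $c_2^2\mathcal{F}_K$ meet in a set of dimension $n-1$, then $c_1^{-1}c_2\in\langle U_K\rangle$. Translating by $c_1^{-2}$, set $v=c_1^{-1}c_2$; the tiles are distinct, so $v\ne\pm1$. For any $a$ in the $(n-1)$-dimensional set $\mathcal{F}_K\cap v^2\mathcal{F}_K$, applying $a\in\mathcal{F}_K$ to the unit $v^{-1}$ gives $\trace_{K/\mathbb{Q}}(a)\le\trace_{K/\mathbb{Q}}(v^{-2}a)$, and applying $v^{-2}a\in\mathcal{F}_K$ to the unit $v$ gives $\trace_{K/\mathbb{Q}}(v^{-2}a)\le\trace_{K/\mathbb{Q}}(a)$; hence $\trace_{K/\mathbb{Q}}(x(1-v^{-2}))=0$ throughout $\mathcal{F}_K\cap v^2\mathcal{F}_K$. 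Since $1-v^{-2}\ne0$, this $(n-1)$-dimensional set spans the hyperplane $\{x:\trace_{K/\mathbb{Q}}(x(1-v^{-2}))=0\}$; as $\mathcal{F}_K$ lies in the halfspace $\trace_{K/\mathbb{Q}}(x(v^{-2}-1))\ge0$ (again the $v^{-1}$ inequality), this hyperplane supports $\mathcal{F}_K$ along an $(n-1)$-dimensional face, i.e. a facet. Thus $\trace_{K/\mathbb{Q}}(x)\le\trace_{K/\mathbb{Q}}((v^{-1})^2x)$ is facet-defining, so $v^{-1}\in U_K$ and $v\in\langle U_K\rangle$.

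To finish, put $H=\langle U_K\rangle$; note $-1\in H$ whenever $U_K\ne\emptyset$ (which holds for $[K:\mathbb{Q}]\ge2$), since the facet inequality depends only on $u^2$, so $U_K$ is stable under $u\mapsto-u$ and hence $u\cdot(-u)^{-1}=-1\in H$. Let $A=\bigcup_{h\in H}h^2\mathcal{F}_K$ and $B=\bigcup_{g\in\mathcal{O}_K^\times\setminus H}g^2\mathcal{F}_K$; both are closed (locally finite unions of closed cones) and together cover $\mathcal{C}$. If some $x\in A\cap B$ avoided the codimension-$\ge2$ skeleton $Z$ of the tiling, it would lie in the relative interior of a facet shared by exactly two tiles, one of ``$A$-type'' and one of ``$B$-type''; but by the face-pairing step these two tiles carry labels in a common coset of $H$, a contradiction. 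Hence $A\cap B\subseteq Z$, which is a closed, locally finite union of cones of dimension $\le n-2$; since removing such a set from the connected manifold $\mathcal{C}$ leaves it connected, $\mathcal{C}\setminus Z$ is connected, yet it is the disjoint union of the relatively closed sets $A\setminus Z$ and $B\setminus Z$. As $A\setminus Z$ contains the nonempty open set $\mathrm{int}(\mathcal{F}_K)\setminus Z$, we get $B\setminus Z=\emptyset$, and since every tile is full-dimensional this forces $B=\emptyset$, i.e. $\mathcal{O}_K^\times\setminus H=\emptyset$. Therefore $H=\mathcal{O}_K^\times$.

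The main obstacle is not this endgame but the structural input it rests on: establishing cleanly that $\mathcal{F}_K$ is a finitely sided polyhedral cone and a genuine fundamental domain for the unit action, and that at a point off the codimension-$\ge2$ skeleton exactly two tiles meet, along a single common facet --- this is precisely what makes ``non-redundant boundary'' well defined and what lets adjacency of tiles be detected by a facet of $\mathcal{F}_K$ in the face-pairing step. Given that standard structure, everything else is bookkeeping, including the $\pm1$ remark above, which is what upgrades the conclusion from ``$U_K$ generates $\mathcal{O}_K^\times/\{\pm1\}$'' to ``$U_K$ generates $\mathcal{O}_K^\times$''.
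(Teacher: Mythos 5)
Your proposal is correct in outline, but it takes a genuinely different, and much heavier, route than the paper. The paper's proof is a short algebraic descent: since the inequalities indexed by $U_K$ are precisely the non-redundant ones, they already cut out $\mathcal{F}_K$, so any totally positive element --- in particular $v^2$ for a hypothetical unit $v\notin\langle U_K\rangle$ --- can be pushed into $\mathcal{F}_K$ by finitely many multiplications by $u^2$ with $u\in U_K$ (the trace strictly decreases at each step and the orbit is discrete with trace bounded below, so the descent terminates). Against this the paper plays a rigidity fact: $\min_{x\in\mathcal{O}_K\setminus\{0\}}\trace_{K/\mathbb{Q}}(x^2)=[K:\mathbb{Q}]$, attained only at $x=\pm1$; hence if $v\notin\langle U_K\rangle$, every orbit element $v^2u^2$ has trace strictly larger than $[K:\mathbb{Q}]$, and evaluating the reducedness inequality at the unit $(vu)^{-1}$ shows no such element can lie in $\mathcal{F}_K$, contradicting the descent. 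So where you run a Poincar\'e-polyhedron argument (fundamental domain for the squared-unit action on the positive cone, face-pairing lemma, codimension-two skeleton and connectedness), the paper substitutes a one-line trace-minimum argument. Your route buys a more conceptual picture --- it identifies $U_K$ with the face pairings of the tiling of $\mathcal{C}$, would transfer to other convex reduction domains, and treats the $\pm1$ bookkeeping explicitly --- while the paper's route buys brevity and avoids all topological bookkeeping. Both proofs rest on the same structural input that you flag (that $\mathcal{F}_K$ is a finitely sided polyhedral cone whose facets come from the non-redundant unit inequalities, and that reduction into it using only those inequalities terminates); the paper leaves this implicit here and asserts the polyhedrality without proof in the following lemma, so your explicit acknowledgement of it is not a defect relative to the paper.
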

\begin{proof}
It is easily seen that $\min_{x \in \mathcal{O}_K \setminus \{0\}}\trace_{K/\mathbb{Q}}(x^2)=\trace_{K/\mathbb{Q}}(1)=[K:\mathbb{Q}]$, for any totally real number field $K$, and that $\min_{x \in \mathcal{O}_K \setminus \{0,\pm 1\}}\trace_{K/\mathbb{Q}}(x^2)>[K:\mathbb{Q}]$. Then, if some unit $v$ cannot be generated by the elements of $U_K$ under multiplication, we must have $\trace_{K/\mathbb{Q}}(v^2u^2)>\trace_{K/\mathbb{Q}}(1)$ for all $u$ in the multiplicative group generated by $U_K$, which is a contradiction, since then $\trace_{K/\mathbb{Q}}(v^2u^2)$ is not reduced for any $u$. 
\end{proof}
\begin{lemma}\label{conj1}
Define $U_K$ as before. Then for all $u \in U_K$, there exists an $a$ in $\mathcal{F}_K$ such that $\trace_{K/\mathbb{Q}}(a)=\trace_{K/\mathbb{Q}}(u^2a)$.
\end{lemma}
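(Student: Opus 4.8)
The plan is to argue by contradiction using the definition of a \emph{non-redundant} boundary together with the fact that $\mathcal{F}_K$ is a full-dimensional closed cone in $K_{\mathbb{R}} \cong \mathbb{R}^n$. Each inequality $\trace_{K/\mathbb{Q}}(a) \leq \trace_{K/\mathbb{Q}}(u^2 a)$ is linear in $a$, so it cuts out a closed half-space $H_u^+$ through the origin with bounding hyperplane $H_u = \{a : \trace_{K/\mathbb{Q}}((u^2-1)a) = 0\}$, and $\mathcal{F}_K = \bigcap_{u \in \mathcal{O}_K^\times} H_u^+$ (this intersection being locally finite, since $\mathcal{F}_K$ is a polyhedral cone — a classical fact from reduction theory for the fundamental domain of the action of $\mathcal{O}_K^\times$ on $K_{>>0}$, which one should cite or note). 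Saying that $u \in U_K$, i.e.\ that the constraint from $u$ is non-redundant, means precisely that removing it enlarges the cone: there is a totally positive $a'$ with $\trace_{K/\mathbb{Q}}((u^2-1)a') < 0$ yet $a'$ satisfying every other defining inequality. What we want is slightly stronger: a totally positive $a$ \emph{on} the face $H_u \cap \mathcal{F}_K$, i.e.\ with $\trace_{K/\mathbb{Q}}((u^2-1)a) = 0$ and $a$ still in $\mathcal{F}_K$ — and moreover with this face having positive dimension so that such an $a$ genuinely exists (the zero cone would not count).

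The key steps, in order. First, fix $u \in U_K$ and let $a'$ be a witness to non-redundancy as above; also fix any $a_0$ in the interior of $\mathcal{F}_K$ (the interior is nonempty since $\mathcal{F}_K$ is full-dimensional — e.g.\ $a_0 = 1$ lies in $\mathcal{F}_K$ and, one checks, in its interior because $\trace_{K/\mathbb{Q}}(u^2) > \trace_{K/\mathbb{Q}}(1)$ strictly for every non-torsion unit $u$, by the same argument as in Lemma~\ref{lm3}). Second, consider the segment $a_\lambda = (1-\lambda)a_0 + \lambda a'$ for $\lambda \in [0,1]$. The function $\lambda \mapsto \trace_{K/\mathbb{Q}}((u^2-1)a_\lambda)$ is affine, strictly positive at $\lambda = 0$ and strictly negative at $\lambda = 1$, so it vanishes at a unique $\lambda^\ast \in (0,1)$; set $a = a_{\lambda^\ast}$. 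Third, check that $a \in \mathcal{F}_K$: for every other defining unit $v$, $\trace_{K/\mathbb{Q}}((v^2-1)a_0) > 0$ (interior point) and $\trace_{K/\mathbb{Q}}((v^2-1)a') \geq 0$ (since $a'$ satisfies all constraints except possibly that of $u$), hence the convex combination is $\geq 0$; and the $u$-constraint holds with equality by construction. Fourth, note $a$ is totally positive: total positivity is an open convex condition and both $a_0, a'$ are totally positive. This produces the desired $a$.

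The main obstacle is the input I flagged: one must know that $\mathcal{F}_K$ really is cut out by \emph{finitely many} (equivalently, locally finitely many) unit inequalities and is a genuine full-dimensional polyhedral cone, so that the notion of "non-redundant facet" behaves as expected and a non-redundant constraint truly touches the boundary along a positive-dimensional face containing totally positive elements. This is standard in the Voronoi/Koecher reduction theory underpinning the whole paper, but it should be invoked explicitly (or, if the authors prefer a self-contained route, derived from the fact that $\mathcal{O}_K^\times$ acts on the cone of totally positive trace forms with a polyhedral fundamental domain). A secondary subtlety is confirming that the witness $a'$ can be taken totally positive rather than merely on the boundary of $K_{>>0}$; this follows by perturbing $a'$ slightly toward $a_0$, using that the $u$-constraint is strictly violated at $a'$ so a small perturbation preserves the strict violation while entering $K_{>>0}$. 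Once these foundational points are in hand, the convexity argument above is routine.
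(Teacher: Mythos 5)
Your argument is correct in substance but follows a genuinely different route from the paper. The paper's proof is an explicit algebraic construction: choosing an integer $k$ with $\trace_{K/\mathbb{Q}}(u^{2k+2})<\trace_{K/\mathbb{Q}}(u^{2k})$, it sets $a=u^{2k}\trace_{K/\mathbb{Q}}(1-u^2)+\trace_{K/\mathbb{Q}}(u^{2k}(u^2-1))$ and verifies by direct computation that $\trace_{K/\mathbb{Q}}(a)=\trace_{K/\mathbb{Q}}(au^2)=n\trace_{K/\mathbb{Q}}(u^{2k+2})-\trace_{K/\mathbb{Q}}(u^{2})\trace_{K/\mathbb{Q}}(u^{2k})$; the non-redundancy of $u$ plays essentially no role there beyond the remark (which you also need) that $\mathcal{F}_K$ is a convex cone with finitely many facets. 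You instead exploit non-redundancy directly: take a witness $a'$ violating only the $u$-inequality, take $a_0=1$ (which satisfies every unit inequality strictly, since $\trace_{K/\mathbb{Q}}(v^2)>n$ for $v\neq\pm1$ by AM--GM and $|\norm_{K/\mathbb{Q}}(v)|=1$), and slide along the segment to the unique $\lambda^\ast$ where the $u$-constraint becomes an equality, convexity preserving all other constraints and total positivity. What your approach buys is that membership of $a$ in $\mathcal{F}_K$ (total positivity together with all the reduction inequalities) comes out automatically; this is precisely what the paper's explicit formula does not check, since its $a$ is only shown to satisfy the trace equality. What it costs is the reliance on the polyhedral description of $\mathcal{F}_K$ and on the exact meaning of ``non-redundant boundary,'' which you rightly flag as requiring a citation to Koecher/Vorono\"i reduction theory (the paper invokes the same polyhedrality without proof). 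Two small additions would make your argument airtight: since every inequality $\trace_{K/\mathbb{Q}}((v^2-1)a)\geq 0$ is a rational linear form in the coordinates of $a$ with respect to an integral basis, the witness $a'$ may be taken in $K$, whence $\lambda^\ast\in\mathbb{Q}$ and your $a$ lies in $K$ (not merely $K_\mathbb{R}$), as the definition of $\mathcal{F}_K$ requires; and your perturbation of $a'$ is unnecessary, because $\lambda^\ast<1$ and $\sigma_i(a_0)>0$ already force $\sigma_i(a_{\lambda^\ast})>0$ whenever $a'$ is totally nonnegative.
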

\begin{proof}
Obviously $1$ satisfies $\trace_{K/\mathbb{Q}}(1) \leq \trace_{K/\mathbb{Q}}(u^2)$ for all $u\in U_K$. Since $\mathcal{F}_K$ is a convex cone with finitely many facets, $U_K$ is a finite set. Obviously there exists 
a nonzero integer $k$ such that $\trace_{K/\mathbb{Q}}(u^{2k}) > \trace_{K/\mathbb{Q}}(u^{2k}\cdot u^2)$. Let 
\[
a= u^{2k}\trace_{K/\mathbb{Q}}(1-u^2) + \trace_{K/\mathbb{Q}}(u^{2k}(u^2-1)).
\]
Then 
\begin{align*}
 \trace_{K/\mathbb{Q}}(a) &=    n\trace_{K/\mathbb{Q}} (u^{2k+2}) - \trace_{K/\mathbb{Q}} (u^{2})\trace_{K/\mathbb{Q}} (u^{2k}), \\
  \trace_{K/\mathbb{Q}}(au^2) &=    n\trace_{K/\mathbb{Q}} (u^{2k+2}) - \trace_{K/\mathbb{Q}} (u^{2})\trace_{K/\mathbb{Q}} (u^{2k})
\end{align*}
as desired.
\end{proof}

Finally, we are able to prove the following theorem.
\begin{theorem}
Suppose that $K$ is a totally real number field of degree $n$ over $\mathbb{Q}$. Denote respectively by $R_K$ and $\Delta_K$ the regulator and discriminant of $K$. Then $K$ is not unit reducible if 
\begin{align*}
    \frac{|\Delta_K|}{1+\frac{1}{2}e^{\frac{2}{\sqrt{n}}R_K^{\frac{1}{n-1}}}} \leq \frac{\left(\frac{n\pi}{4}\right)^n}{\Gamma\left(\frac{n}{2}+1\right)^2},
\end{align*}
where $\Gamma$ denotes the gamma function.
\end{theorem}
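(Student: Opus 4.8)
The plan is as follows.

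\emph{Reduction to a lattice‑point count.} I would first observe that $K$ fails to be unit reducible precisely when some \emph{reduced} $a\in\mathcal{F}_K$ satisfies $\mu(a)<\trace_{K/\mathbb{Q}}(a)$: replacing $a$ by $au^2$ preserves $\mu(a)$ and permutes the minimal vectors of $q_a$ by multiplication by units, so one may assume $a$ reduced, and for reduced $a$ one has $\min_{u\in\mathcal{O}_K^\times}\trace_{K/\mathbb{Q}}(au^2)=\trace_{K/\mathbb{Q}}(a)$, which is strictly larger than $\mu(a)$ exactly when the minimum of $q_a$ is attained only at non‑units. Under the embedding $x\mapsto(\sigma_1(x),\dots,\sigma_n(x))$, $\mathcal{O}_K$ maps to a lattice of covolume $\sqrt{|\Delta_K|}$ and $q_a$ becomes the diagonal form $\sum_i\sigma_i(a)y_i^2$, so the region $\{y:\,q_a(y)<\trace_{K/\mathbb{Q}}(a)\}$ is an ellipsoid of volume $\frac{\pi^{n/2}}{\Gamma(n/2+1)}\cdot\frac{\trace_{K/\mathbb{Q}}(a)^{n/2}}{\sqrt{\norm_{K/\mathbb{Q}}(a)}}$. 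By Minkowski's convex body theorem it contains a nonzero lattice point --- necessarily a non‑unit, since $a$ is reduced --- as soon as $\trace_{K/\mathbb{Q}}(a)^{n}/\norm_{K/\mathbb{Q}}(a)>4^n\Gamma(n/2+1)^2|\Delta_K|/\pi^n$. Comparing with the hypothesis, it therefore suffices to produce a reduced $a$ with $\trace_{K/\mathbb{Q}}(a)^{n}/\norm_{K/\mathbb{Q}}(a)\ge n^n\bigl(1+\tfrac12 e^{\frac{2}{\sqrt n}R_K^{1/(n-1)}}\bigr)$.

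\emph{Passing to the logarithmic model.} Since scaling $a$ by a positive rational changes neither membership in $\mathcal{F}_K$ nor the ratio $\trace_{K/\mathbb{Q}}(a)^n/\norm_{K/\mathbb{Q}}(a)$, I would pass to $\Log(a)=(\log\sigma_i(a))_i$ and set $F(s)=\sum_i e^{s_i}$. Then $a\in\mathcal{F}_K$ is equivalent to $\Log(a)$ minimising $F$ on its coset modulo $2\Lambda_{\log}$, where $\Lambda_{\log}=\Log(\mathcal{O}_K^\times)$ has covolume $\sqrt n\,R_K$ in the trace‑zero hyperplane $H$, and on $H$ one has $\trace_{K/\mathbb{Q}}(a)^n/\norm_{K/\mathbb{Q}}(a)=F(\Log a)^n$. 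Because $\mathcal{F}_K$ is a polyhedral cone (Lemma \ref{lm3}) and $K$ is dense in $K_\mathbb{R}$, the supremum over reduced $a$ of $\trace_{K/\mathbb{Q}}(a)^n/\norm_{K/\mathbb{Q}}(a)$ equals $\bigl(\max_{s\in\mathcal{D}}F(s)\bigr)^n$, where $\mathcal{D}=\{s\in H:\,F(s)\le F(s+v)\ \forall v\in 2\Lambda_{\log}\}$ is a compact fundamental domain for $2\Lambda_{\log}$ of $(n-1)$‑volume $2^{n-1}\sqrt n\,R_K$. The problem is now to bound $\max_{\mathcal{D}}F$ from below.

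\emph{The eccentricity estimate, and the obstacle.} By Minkowski's theorem on successive minima applied to $\Lambda_{\log}\subset H$, there is a unit $\varepsilon$ with $\|\Log\varepsilon\|=\lambda_{n-1}(\Lambda_{\log})\gtrsim R_K^{1/(n-1)}$; after possibly replacing $\varepsilon$ by $\varepsilon^{-1}$, some conjugate satisfies $\log|\sigma_{i_0}(\varepsilon)|\ge\tfrac{1}{\sqrt n}R_K^{1/(n-1)}$, so $\trace_{K/\mathbb{Q}}(\varepsilon^2)\ge e^{\frac{2}{\sqrt n}R_K^{1/(n-1)}}$ while $\norm_{K/\mathbb{Q}}(\varepsilon^2)=1$. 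The natural candidate $a=1+\varepsilon^2$ has $\trace_{K/\mathbb{Q}}(a)=n+\trace_{K/\mathbb{Q}}(\varepsilon^2)$ and norm controlled through $\prod_i\sigma_i(\varepsilon)^2=1$, but it need not be reduced, and passing to its reduced representative only shrinks the trace; so instead I would work inside $\mathcal{D}$, following the segment from the origin to $2\Log\varepsilon$ and using strict convexity of $F$ along it together with the fact that $2\Log\varepsilon$ is the nearest competing point of $2\Lambda_{\log}$ in that direction, to show that the segment stays in $\mathcal{D}$ until $F$ has grown past $n\bigl(1+\tfrac12 e^{\frac{2}{\sqrt n}R_K^{1/(n-1)}}\bigr)^{1/n}$. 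This last step --- quantifying how far $\mathcal{D}$ reaches along the direction of the longest unit, and thereby matching the constants $\tfrac{2}{\sqrt n}$ and $\tfrac12$ (using, if needed, that the hypothesis already forces $R_K$ to be large relative to $\log|\Delta_K|$) --- is the main obstacle: one cannot simply evaluate $F$ at a convenient unit combination, because the reduced representative of any orbit \emph{minimises} the ratio $\trace_{K/\mathbb{Q}}(a)^n/\norm_{K/\mathbb{Q}}(a)$, so the bound must come from the global geometry of the fundamental domain. Once it is in place, Steps 1 and 2 finish the proof.
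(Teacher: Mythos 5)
Your reduction step and your source of the exponential term both coincide with the paper's argument: the paper likewise applies Minkowski's convex body theorem to the ellipsoid $\{q_a<\trace_{K/\mathbb{Q}}(a)\}$ of volume $\frac{\pi^{n/2}}{\Gamma(n/2+1)}\trace_{K/\mathbb{Q}}(a)^{n/2}/\sqrt{\norm_{K/\mathbb{Q}}(a)}$ against the covolume $\sqrt{|\Delta_K|\norm_{K/\mathbb{Q}}(a)}$, so that non-unit-reducibility follows once one exhibits a \emph{reduced} $a$ with $\trace_{K/\mathbb{Q}}(a)^n/\norm_{K/\mathbb{Q}}(a)$ large, and it likewise extracts a ``long'' unit $u$ with $\max_j\log|\sigma_j(u)|\ge \frac{1}{\sqrt n}\lambda_{n-1}(\mathcal{L})\ge\frac{1}{\sqrt n}R_K^{1/(n-1)}$ from the log-unit lattice via Lemma \ref{lm2}. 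Up to that point your outline is sound.

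However, the step you yourself flag as ``the main obstacle'' is precisely where the proof lives, and your proposed substitute (tracking how far the fundamental domain reaches along the segment towards $2\Log\varepsilon$ using convexity of $F$) is not carried out and would demand exactly the quantitative control you admit you do not have; as written, this is a genuine gap. The paper closes it with a different and much cheaper device: by Lemma \ref{lm3}, the units cutting out the non-redundant walls of $\mathcal{F}_K$ generate the whole unit group, hence include $n-1$ multiplicatively independent units, one of which is long in the above sense; and by Lemma \ref{conj1} each such wall is actually attained, i.e.\ there exists $a\in\mathcal{F}_K$ with $\trace_{K/\mathbb{Q}}(a)=\trace_{K/\mathbb{Q}}(au^2)$. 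For this boundary element one writes $2\trace_{K/\mathbb{Q}}(a)=\trace_{K/\mathbb{Q}}((1+u^2)a)\ge n\,\norm_{K/\mathbb{Q}}(1+u^2)^{1/n}\norm_{K/\mathbb{Q}}(a)^{1/n}$ by AM--GM, and $\norm_{K/\mathbb{Q}}(1+u^2)\ge 2+\trace_{K/\mathbb{Q}}(u^2)\ge 2+e^{\frac{2}{\sqrt n}R_K^{1/(n-1)}}$, which is the required lower bound on $\trace_{K/\mathbb{Q}}(a)^n/\norm_{K/\mathbb{Q}}(a)$. This sidesteps your objection that a reduced representative minimises the ratio within its orbit: one does not optimise over the orbit at all, one evaluates at a point of $\mathcal{F}_K$ lying on the wall shared with the long unit, where the equality $\trace_{K/\mathbb{Q}}(a)=\trace_{K/\mathbb{Q}}(au^2)$ converts reducedness into a usable identity. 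Without some version of Lemmas \ref{lm3} and \ref{conj1} (equivalently, a proof that the reduction domain touches the bisecting wall of $2\Log u$ for a generating set of units), your outline does not yield the theorem.
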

\begin{proof}
Suppose that $a$ is a totally positive element of $K$. We may assume without loss of generality that $a \in \mathcal{F}_K$. Then, consider the sphere of radius $\sqrt{\trace_{K/\mathbb{Q}}(a)}$ and dimension $n$, given by $\mathcal{B}_n\trace_{K/\mathbb{Q}}(a)^{\frac{n}{2}}$, where $\mathcal{B}_n$ denotes the ball of unit radius. By Minkowski's theorem, since the volume of the lattice generated by $\trace_{K/\mathbb{Q}}(ax^2)$ is $\sqrt{|\Delta_K|\norm_{K/\mathbb{Q}}(a)}$, $K$ is not unit reducible if we can find an $a \in \mathcal{F}_K$ satisfying
\begin{align*}
    \vol(\mathcal{B}_n\trace_{K/\mathbb{Q}}(a)^{\frac{n}{2}})=\frac{\pi^{\frac{n}{2}}}{\Gamma\left(\frac{n}{2}+1\right)}\trace_{K/\mathbb{Q}}(a)^{\frac{n}{2}}>2^n\sqrt{|\Delta_K|\norm_{K/\mathbb{Q}}(a)},
\end{align*}
and so we need
\begin{align}\label{ineq12}
    \trace_{K/\mathbb{Q}}(a)>\frac{4}{\pi}|\Delta_K|^{\frac{1}{n}}\norm_{K/\mathbb{Q}}(a)^{\frac{1}{n}}\Gamma\left(\frac{n}{2}+1\right)^{\frac{2}{n}},
\end{align}
for some $a \in \mathcal{F}_K$. Now, if Lemma \ref{conj1} holds, by Lemma \ref{lm3}, there must exist at least $n-1$ nontrivial units $u_1,u_2,\dots,u_{n-1}$ such that $\{u_1,\dots,u_{n-1}\}$ constitute a multiplicative basis for the unit group, and that there exists $a_1,a_2,\dots,a_{n-1} \in \mathcal{F}_K$ such that
\begin{align*}
    \trace_{K/\mathbb{Q}}(a_i)=\trace_{K/\mathbb{Q}}(u_i^2a_i),
\end{align*}
for each $1 \leq i \leq n-1$. Then, for each $i$, using the AGM inequality,
\begin{align*}
    2\trace_{K/\mathbb{Q}}(a_i)=\trace_{K/\mathbb{Q}}((1+u_i^2)a_i)\geq n\norm_{K/\mathbb{Q}}((1+u_i^2))^{\frac{1}{n}}\norm_{K/\mathbb{Q}}(a_i)^{\frac{1}{n}}.
\end{align*}
Note that
\begin{align*}
    &\norm_{K/\mathbb{Q}}((1+u_i^2))=\prod_j \sigma_j(1+u_i^2)=2+\trace_{K/\mathbb{Q}}(u_i^2)+(\text{other positive terms}) \\&\geq 2+\trace_{K/\mathbb{Q}}(u_i^2) > 2+\max_{j} \sigma_j(u_i^2).
\end{align*}
Denote by $\mathcal{L}=\{(\log(|\sigma_1(v)|),\log(|\sigma_2(v)|),\dots,\log(|\sigma_n(v)|)), \forall v \in \mathcal{O}_K^\times\}$ the \emph{log-unit lattice} (clearly this satisfies the axioms of a lattice by our previous definition), whose covolume is $\vol(\mathcal{L})=\sqrt{n}R_K$. Then for some $1 \leq i \leq n$, it must hold that \begin{align*}&\|(\log(|\sigma_1(u_i)|),\log(|\sigma_2(u_i)|),\dots,\log(|\sigma_n(u_i)|))\|_{\infty} \\&\geq \frac{1}{\sqrt{n}}\|(\log(|\sigma_1(u_i)|),\log(|\sigma_2(u_i)|),\dots,\log(|\sigma_n(u_i)|))\| \geq \frac{1}{\sqrt{n}}\lambda_{n-1}(\mathcal{L}),\end{align*} so
\begin{align*}
    &2\trace_{K/\mathbb{Q}}(a_i) > n(2+\max_{\sigma} \sigma(u_i^2))^{\frac{1}{n}}\norm_{K/\mathbb{Q}}(a_i)^{\frac{1}{n}}=n\left(2+e^{2\log(\max_{\sigma} |\sigma(u_i)|)}\right)^{\frac{1}{n}}\norm_{K/\mathbb{Q}}(a_i)^{\frac{1}{n}}
    \\&\geq n\left(2+e^{\frac{2}{\sqrt{n}}\lambda_{n-1}(\mathcal{L})}\right)^{\frac{1}{n}}\norm_{K/\mathbb{Q}}(a_i)^{\frac{1}{n}}\geq n\left(2+e^{\frac{2}{\sqrt{n}}R_K^{\frac{1}{n-1}}}\right)^{\frac{1}{n}}\norm_{K/\mathbb{Q}}(a_i)^{\frac{1}{n}},
\end{align*}
which yields the required result.
\end{proof}

\section{Perfect Unary Forms}
We will now demonstrate how the study of unit reducible fields has direct application to the study of perfect unary forms.
\\
\begin{proof}[Proof of Theorem \ref{thm1}.]
Let $K=\mathbb{Q}(\sqrt{d})$ denote a real quadratic field. As in section \ref{sec2}, we will use the notation $\sigma_2$ to denote the field automorphism that sends $x_1+x_2\sqrt{d}$ to $x_1-x_2\sqrt{d}$, for any $x_1+x_2\sqrt{d} \in K$. For any $a \in K_{>>0}$,
\begin{align}
    av^2+\sigma_2(av^2)>0, \label{ineq11}
\end{align}
for all $v \in \mathcal{O}_K \setminus \{0\}$.\\
Suppose that $K$ is unit reducible. Consider the quadratic form $x^2$. This form is clearly not perfect, since $\mathcal{M}(1)=\{ \pm 1\}$ and inequality \eqref{ineq11} has rank $1$. Hence, there exists a perfect unary form $ax^2$ such that $\mu(a)=\mu(1)=2$ and $\{\pm 1\} \subset \mathcal{M}(a)$. Let $v \in \mathcal{M}(a) \setminus \{ \pm 1\}$. Since $K$ is unit reducible, we may assume that $v$ is a unit. Therefore the system of linear equations
\begin{align}\label{ineq12}
    &z \cdot 1^2+ w\cdot 1^2=t,
    \\& z \cdot v^2+ w \cdot \sigma_2(v^2)=t
\end{align}
has a unique solution $(z,w)$. Using Cramer's rule, we have
\begin{align*}
    z=\frac{\begin{vmatrix}t & 1
    \\ t & \sigma_2(v^2)
    \end{vmatrix}}{\begin{vmatrix}1 & 1 \\ v^2 & \sigma_2(v^2)\end{vmatrix}} \hspace{2mm} \text{and} \hspace{2mm} w=\frac{\begin{vmatrix}1 & t
    \\ v^2 & t
    \end{vmatrix}}{\begin{vmatrix}1 & 1 \\ v^2 & \sigma_2(v^2)\end{vmatrix}}.
\end{align*}
Without loss of generality, one can assume that $t \in \mathbb{Q}$, so 
\begin{align*}
    \sigma_2\left(\begin{vmatrix}
    1 & t
    \\
    v^2 & t
    \end{vmatrix}\right)
    =-\begin{vmatrix}
    t & 1
    \\
    t & \sigma_2(v^2)
    \end{vmatrix},\hspace{2mm}
    \sigma\left(\begin{vmatrix}
    1 & 1
    \\
    v^2 & \sigma_2(v^2)
    \end{vmatrix}\right)=-\begin{vmatrix}
    1 & 1
    \\
    \sigma_2(v^2) & v^2
    \end{vmatrix},
\end{align*}
hence $w=\sigma_2(z)$ (i.e. $w$ is a field conjugate of $z$).
\\
Now, if $\norm_{K/\mathbb{Q}}(v)=1$, an immediate calculation shows that $z=rv^{-1}$ for some positive rational $r$. If $u$ is the fundamental generator of the unit group of $K$ modulo roots of unity, then we must have $v=u^k$ for some integer $k$. Given that the form $rv^{-1}$ is necessarily reduced, it holds that $v \in \{u,u^{-1}\}$.
\\
If $\norm_{K/\mathbb{Q}}(v)=-1$, an immediate calculation shows that $z=\sqrt{d}sv^{-1}$ for some positive rational $s$. Similarly to before, we deduce that $v \in \{u,u^{-1}\}$.
\\
Now, given that the system of linear equations \eqref{ineq12} is invariant under the replacement of $v$ by $\sigma_2(v)$, if the unary form $v^{-1}x^2$ (resp. $\sqrt{d}v^{-1}x^2$) is perfect, then so is $\sigma_2(v^{-1})x^2$ (resp. $\sigma_2(\sqrt{d}v^{-1})x^2$. However, $v^{-1}x^2=v(v^{-1}x)^2$, so the forms $vx^2, v^{-1}x^2$ are equivalent (resp. $v^{-1}\sqrt{d}x^2, v\sqrt{d}x^2$ are equivalent forms).
\\
Recall that two unary forms $ax^2, bx^2$ are called \emph{neighbouring forms} if $a \neq b$, $\mu(a)=\mu(b)$ and $\mathcal{M}(a) \cap \mathcal{M}(b) \neq \emptyset$. It is known that the graph of neighbouring forms is connected \cite{mar03}. First, we consider the neighbouring forms of $u^{-1}x^2$, when $u$ is totally positive and $u>1$. We have shown that $\mathcal{M}(u^{-1})=\{\pm 1, \pm u\}$. Since $\sigma_2(u^{-1})=u$, $\mathcal{M}(u)=\{\pm 1, \pm u^{-1}\}$. But the intersection of $\mathcal{M}(u)$ and $\mathcal{M}(u^{-1})$ is non-empty (equal to $\{\pm 1\}$), so $ux^2,u^{-1}x^2$ are neighbouring forms. Moreover, we have already shown they are also equivalent. Next, we will multiply both forms by $u^{-2}$ to obtain the forms $u^{-1}x^2$ and $u^{-3}x^2$. Obviously, $\mathcal{M}(u^{-3})\cap \mathcal{M}(u^{-1})=\{\pm 1\}$, so the forms are neighbours, and moreover they are equivalent. Thus, all neighbouring forms of the perfect form $u^{-1}x^2$ are equivalent to $u^{-1}x^2$. From this, we conclude that there exists only one class of unary perfect forms up to equivalence and scaling.
\\
To complete the proof, we need to consider the case if $K$ is not unit reducible. Then, there exist $a \in K_{>>0}$ and $v \in \mathcal{O}_K$ such that
\begin{align*}
    \trace_{K/\mathbb{Q}}(av^2)<\trace_{K/\mathbb{Q}}(a),
\end{align*}
and $v$ is not a unit. It follows from the work of Koecher \cite{koe60} that there exists a $b \in K_{>>0}$ such that $\mu(a)=\mu(b)$ and $bx^2$ is a perfect unary form. Obviously, $1 \not\in \mathcal{M}(b)$. Thus, there exists a perfect form $b^\prime x^2$ such that $\mu(b)=\mu(b^\prime)$. But $b^\prime x^2$ is not equivalent to $bx^2$, so there must be at least two classes of unary perfect forms, which completes the proof.
\end{proof}
\begin{proof}[Proof of Theorem \ref{thm2}.]
Throughout, we will assume that $K$ is not unit reducible, as we have already treated this case. First, we consider the case $d \equiv 2,3 \mod 4$. Consider the set $\{\pm 1, \pm (n-\sqrt{d})\}$. The system of linear equations
\begin{align*}
    z+w&=1,
    \\
    z(n-\sqrt{d})^2+w(n+\sqrt{d})^2&=1
\end{align*}
has a unique solution
\begin{align*}
    z=\frac{(n+\sqrt{d})^2-1}{4n\sqrt{d}}, \hspace{2mm} w=\sigma_2(z).
\end{align*}
Let
\begin{align*}
    a=2nd+(n^2+d-1)\sqrt{d}.
\end{align*}
It is totally positive since
\begin{align*}
    \norm_{K/\mathbb{Q}}(a)=4n^2d^2-(n^2+d-1)^2d=4n^4+(6r-r^2-1)n^2+(2r^2-r^3-1)>0,
\end{align*}
if $n \geq 2$. 
\\
We will now consider the minimum of $ax^2$. Let
\begin{align*}
    f(x_1,x_2)=\trace_{K/\mathbb{Q}}(a(x_1+x_2\sqrt{d})^2)=4d(nx_1^2+(n^2+d-1)x_1x_2+ndx_2^2)\triangleq 4dg(x_1,x_2).
\end{align*}
Now
\begin{align*}
    g(x_1-nx_2,x_2)&=n(x_1-nx_2)^2+(2n^2+r-1)(x_1-nx_2)x_2+n(n^2+r)x_2^2
    \\&=nx_1^2+(r-1)x_1x_2+nx_2^2 \triangleq h(x_1,x_2).
\end{align*}
The real quadratic field $K$ is of R--D type, so $-n \leq r-1 < n$ and thus the binary quadratic form $h(x_1,x_2)$ is reduced, so the minimum of $f$ is $4nd$. Moreover, the minimal vectors of $g$ (and of $f$) are $\pm(1,0)$ and $\pm(n,-1)$. From this, we have that $ax^2$ attains its (trace) minimum at $\pm 1$ and $\pm (n-\sqrt{d})$. Hence, $ax^2$ is a perfect unary form.
\\
Now, consider when $d \equiv 1 \mod 4$ and $n$ odd. Consider the set $\left\{\pm 1, \pm \frac{n-\sqrt{d}}{2}\right\}$. As before, the system of linear equations
\begin{align*}
    z+w&=1,
    \\
    z\left(\frac{n-\sqrt{d}}{2}\right)^2+w\left(\frac{n+\sqrt{d}}{2}\right)^2&=1
\end{align*}
has a unique solution
\begin{align*}
    z=\frac{\frac{1}{4}(n+\sqrt{d})^2-1}{n\sqrt{d}}, \hspace{2mm} w=\sigma_2(z).
\end{align*}
Let
\begin{align*}
    a=2nd+(n^2+d-4)\sqrt{d}.
\end{align*}
$a$ is totally positive, since
\begin{align*}
    &\norm_{K/\mathbb{Q}}(a)\\&=d(4n^4+4n^2r+16n^2+8r-4n^4-4n^2r-r^2-16)=d(16n^2+8r-r^2-16)>0,
\end{align*}
if $n \geq 2$. 
\\
Consider the minimum of $ax^2$. Let
\begin{align*}
    f(x_1,x_2)&=\trace_{K/\mathbb{Q}}\left(a\left(x_1+x_2\frac{1+\sqrt{d}}{2}\right)^2\right)\\&=d\left(4nx_1^2+2(2n+n^2+d-4)x_1x_2+(n+nd+n^2+d-4)x_2^2\right)\triangleq dg(x_1,x_2).
\end{align*}
Now, letting $p=\frac{n+1}{2}$,
\begin{align*}
    &g(x_1-px_2,x_2)=4n(x_1-px_2)^2+(2n^2+n+d(n+2)-8)x_2^2+2(n^2+2n+d-4)(x_1-px_2)x_2
    \\&=4nx_1^2+2(n^2+2n+d-4-4np)x_1x_2\\&+\left(2n^2+n+d(n+2)-8+4np^2-2p(n^2+2n+d-4))\right)x_2^2
    \\&=4nx_1^2+2(r-4)x_1x_2+(2n^2+(4-r)n+(5r-8))x_2^2,
\end{align*}
and again it is easy to see that this form is reduced for all $n \geq 2$. Given that $\frac{n-\sqrt{d}}{2}$ is a rational scalar multiplied by $n-\sqrt{d}$, all further arguments throughout the proof can be easily extended to this case too, and so we omit treating the case $d \equiv 1 \mod 4$ throughout the rest of the proof.
\\
Now, if $ax^2$ is perfect, then $\sigma_2(a)x^2$ is perfect too. Moreover, the minimal vectors of $\sigma_2(a)x^2$ are $\mathcal{M}(\sigma_2(a))=\{\pm 1, \pm (n+\sqrt{d})\}$.
\\
Again, if $bx^2$ is a perfect neighbour of $ax^2$, then $\sigma_2(b)x^2$ is a perfect neighbour of $\sigma_2(a)x^2$ by the symmetry of the action of the Galois group. Since $1$ is invariant under the Galois action, $ax^2$, $\sigma_2(a)x^2$ share the common minimum $\pm 1$. Thus, $ax^2$, $\sigma_2(a)x^2$ are perfect neighbours. To find another neighbour, we consider the action of the unit group on $\{\pm(n-\sqrt{d})\}$. It is known that if $|r| \neq 1,4$ then the fundamental unit is (see \cite{yuk68})
\begin{align*}
    u_0=\frac{n^2+d+2n\sqrt{d}}{r}.
\end{align*}
Immediate calculations yield that
\begin{align*}
    (n-\sqrt{d})u_0=n+\sqrt{d}=\sigma_2(n-\sqrt{d}).
\end{align*}
However, $(\sigma_2(a)u_0^2)x^2$ attains its minimum at $\pm u_0^{-1}$ and $\pm u_0^{-1}(n+\sqrt{d})=(n-\sqrt{d})$, so the corresponding perfect neighbour is equivalent to $\sigma_2(a)x^2$. From this, we conclude that all perfect neighbours of $ax^2$ are equivalent to $\sigma_2(a)x^2$.
\\
Since $K$ is not unit reducible, the perfect unary forms $ax^2, \sigma_2(a)x^2$ are not equivalent.
\end{proof}
\begin{proof}[Proof of Theorem \ref{thm3}]
See Appendix.
\end{proof}

\section*{Appendix}
We will continue to use notations as in section 2.2. First note that $r_2=r_1\sigma_2(\theta_t)^{-2}$ and 
$r_3=r_1 \theta_t^2$, hence $r_1\sim r_2 \sim r_3$. Also $s_3=s_1\theta_t^2\sigma_2(\theta_t)^2$ and $s_2=s_1\theta_t^2$.

\begin{proof}[Proof of Theorem \ref{thm3}.]
The simplest cubic field is a vector space over $\mathbb{Q}$ equipped with a non-degenerate bilinear form $a.b=\trace_{K/\mathbb{Q}}(ab)$.

Since there are only two equivalence classes of perfect ray forms, so we apply the Vorono\"i algorithm for enumerating perfect neighbouring form to $r_1$ and $s_1$ (because equivalent perfect forms have equivalent perfect neighbours). 

\noindent\textbf{Perfect neighbours of $r_1$.} Recall that the trace minimum of
\[
r_1 = \left((-t - 2)\theta_t^2 + (t^2 + 2t + 1)\theta_t + (t^2 + 4t + 7)\right)x^2=
(3+\theta_t + (t+2)\sigma_2(\theta_t))x^2.
\]
is attained at $v_1=-(t+1)+\theta_t+\sigma_2(\theta_t)$, $v_2=1$ and $v_3=\theta_t$. Denote by $\Pi_{r_1}$ the 
convex cone generated by $v_1^2$, $v_2^2$ and $v_3^2$. The hyperplane 
generated by $v_i^2$ and $v_j^2$ will be denoted by $L(v_i,v_j)$.
\begin{itemize}
    \item \underline{Perfect neighbour along the facet $L(v_2, v_3) \cap \Pi_{r_1}$.} After solving the system of linear equations
    \begin{align}
        \trace_{K/\mathbb{Q}}(\psi_1 (v_2^2))&=0, \label{eq:r1ls1a} \\
        \trace_{K/\mathbb{Q}}(\psi_1 (v_3^2))&=0 \label{eq:r1ls1b}
    \end{align}
    subject to $\trace_{K/\mathbb{Q}}(\psi_1 (v_3^2))>0$ we obtain 
    \begin{align*}
    \psi_1(x)&=\big((-t^3 - 5t^2 - 12t - 11)\theta_t^2 - (t^4 + 5t^3+ 13t^2 + 14t + 4)\theta_t \\& - (t^4 + 7t^3 + 24t^2 + 42t + 31)\big)x^2 \\
    &= \left((2t^2 + 7t + 9)+(t^2 + 3t + 4)\theta_t +(t^3 + 5t^2 + 12t + 11)\sigma_2(\theta_t)\right)x^2.
    \end{align*}
    With respect to the integral basis $1,\theta_t, \sigma_2(\theta_t)$
    the facet vector $\psi_1(x)=(p_0+p_1\theta_t+p_2\sigma_2(\theta_t))x^2$ 
    is a solution to the following system of linear inequalities
    \[
    \begin{pmatrix}
    t^2+ 2t+ 6 & t^3+ 3t^2+ 9t+ 3 & -t^2- 3t- 6 \cr
  t^2+ 4t+ 9 & -t^2- 4t- 12 & -t- 3
    \end{pmatrix}
    \begin{pmatrix}
    p_0 \\ p_1 \\ p_2
    \end{pmatrix}=0.
    \]
    Since 
    \[
    \trace_{K/\mathbb{Q}}(\psi_1(v_3)) = t^4 + 6t^3 + 21t^2 + 36t + 27 >0 ,
    \]
    for all $t\geq -1$, so $\psi_1$ is a facet vector.
    But
    \[
    N_1=r_1+\frac{1}{2} \psi_1 = \frac{1}{2} s_1 \sigma_2(\theta_t)^2 \sim \frac{1}{2} s_1 
    \]
    hence the perfect neighbour $N_1$ is homothetic to already known perfect form $s_1$.
    \item \underline{Perfect neighbour along the facet $L(v_1, v_3) \cap \Pi_{r_1}$.}
    The facet vector $\psi_2$ must satisfy the following conditions
    \begin{align}
        \trace_{K/\mathbb{Q}}(\psi_2 (v_1^2))&=0,\label{eq:r1ls2a}\\
        \trace_{K/\mathbb{Q}}(\psi_2 (v_3^2))&=0, \label{eq:r1ls2b}\\
        \trace_{K/\mathbb{Q}}(\psi_2 (v_2^2))&>0\label{eq:r1ls2c}
    \end{align}
    (see \cite[Proposition 7.1.10]{mar03}). From 
    \begin{equation}
    \label{eq:r1:matEq}
    \begin{pmatrix}
    3&t&t\cr
  t^2+ 4\*t+ 9 & -t^2- 4t- 12 & -t- 3
    \end{pmatrix}
    \begin{pmatrix}
    p_0 \\ p_1 \\ p_2
    \end{pmatrix}=0,
    \end{equation}
    where  $\psi_2(x)=(p_0+p_1\theta_t+p_2\sigma_2(\theta_t))x^2$, we obtain
    \begin{align*}
    \psi_2(x) &= \left((t + 4)\theta_t^2 + (-t^2 - 3t + 1)\theta_t + (-t^2 - 5t - 8)\right)x^2\\&= \left(t+(t+1)\theta -(t+4)\sigma_2(\theta)\right)x^2.
    \end{align*}
    (The matrix equation (\ref{eq:r1:matEq}) was a matrix form of 
    the equations (\ref{eq:r1ls2a}) and (\ref{eq:r1ls2b})). It is straightforward to check 
    \[
    \trace_{K/\mathbb{Q}}(\psi_2(v_2)) = t^4 + 6t^3 + 21t^2 + 36t + 27>0 \quad \mbox{for all $t\geq -1$.}
    \]
    Direct calculations show that
    \[
    N_2=r_1+\frac{1}{2} \psi_2 = \frac{1}{2} s_3.
    \]
    \item \underline{Perfect neighbour along the facet $L(v_1, v_2)\cap \Pi_{r_1}$.} The facet vector $\psi_3$ is a solution to the system of linear equations
    \begin{align}
        \trace_{K/\mathbb{Q}}(\psi_3 (v_1^2))&=0,\label{eq:r1ls3a}\\
        \trace_{K/\mathbb{Q}}(\psi_3 (v_2^2))&=0 \label{eq:r1ls3b}
    \end{align}
    subject to $\trace_{K/\mathbb{Q}}(\psi_3 (v_3^2))>0$. Let 
    $\psi_3(x)=(p_0+p_1\theta_t+p_2\sigma_2(\theta_t))x^2$. Then
    \[
    \begin{pmatrix}
    3&t&t\\
  t^2+ 2t+ 6 & t^3+ 3t^2+ 9t+ 3 & -t^2- 3t- 6
    \end{pmatrix}
    \begin{pmatrix}
    p_0 \\ p_1 \\ p_2
    \end{pmatrix}=0
    \]
    and
    \begin{align*}
    \psi_3(x) &= \left((2t + 1)\theta_t^2 - (2t^2 + 2t + 2)\theta_t - (t^2 + 4t + 2)\right)x^2 \\&= \left(t^2+t - (t+2)\theta_t -(2t+1)\sigma_2(\theta_t)\right)x^2. 
    \end{align*}
    But $\psi_3$ is a facet vector, because 
    \[
    \trace_{K/\mathbb{Q}}(\psi_3(v_3))= t^4 + 6t^3 + 21t^2 + 36t + 27>0.
    \]
    The corresponding perfect neighbour of $r_1$ is
    \[
    N_3 = r_1+\frac{1}{2} \psi_3 = \frac{1}{2} s_1.
    \]
\end{itemize}
\noindent\textbf{Perfect neighbours of $s_1$.} Recall that the trace minimum of
\[
s_1(x)= (-3\theta_t^2 + 2t\theta_t + (t^2 + 4t + 12))x^2=
(t^2+t+6 -t\theta_t+3\sigma_2(\theta_t))x^2
\]
is attained at $v_1,v_2$ and $v_4=1+\theta_t$. 
\begin{itemize}
    \item \underline{Perfect neighbour along the facet $L(v_1, v_4) \cap \Pi_{s_1}$.} Again, the conditions for the facet vector $\psi_4(x)$ are (see \cite[Proposition 7.1.10]{mar03})
    \begin{align}
        \trace_{K/\mathbb{Q}}(\psi_2 (v_1^2))&=0,\label{eq:s1ls1a}\\
        \trace_{K/\mathbb{Q}}(\psi_2 (v_3^2))&=0, \label{eq:s1ls1b}\\
        \trace_{K/\mathbb{Q}}(\psi_2 (v_2^2))&>0\label{eq:s1ls1c}.
    \end{align}
    With respect to the indeterminants $p_0$, $p_1$ and $p_3$, where
    $psi_4(x)=(p_0+p_1\theta_t + p_2\sigma_2(\theta_t))x^2$, we get the matrix equation
    \[
    \begin{pmatrix}
    3&t&t\cr
  t^2+ 4t+ 9 & t^3+ 5t^2+ 14t+ 15 & -t^2- 4t- 12
    \end{pmatrix}
    \begin{pmatrix}
        p_0 \\ p_1 \\ p_2
    \end{pmatrix}=0.
    \]
    From this we obtain 
    \begin{align*}
    \psi_4(x)&= \left( (2t + 5)\theta_t^2 - (2t^2 + 6t + 4)\theta_t - (t^2 + 12t + 10) \right) x^2\\&= 
    \left((t^2+3t)-(t+4)\theta_t -(2t+5)\sigma_2(\theta_t)\right)x^2.
    \end{align*}
    Since 
    \[
    \trace_{K/\mathbb{Q}}(\psi_4(v_2))=2(t^2+3t+9)>0,
    \]
    we have that $\psi_4$ is a facet vector. Moreover,
    \[
    N_4=s_1+\psi_4 = 2r_1\sigma_2(\theta_t)^{-2} \sim 2r_1.
    \]
    \underline{Perfect neighbour along the facet $L(v_2, v_4)\cap \Pi_{s_1}$.}
    The facet vector $\psi_5$ must satisfy
    \begin{align}
        \trace_{K/\mathbb{Q}}(\psi_5 (v_2^2))&=0,\label{eq:s1ls2a}\\
        \trace_{K/\mathbb{Q}}(\psi_5 (v_4^2))&=0, \label{eq:s1ls2b}\\
        \trace_{K/\mathbb{Q}}(\psi_5 (v_1^2))&>0\label{eq:s1ls2c}.
    \end{align}
    Let $\psi_5(x)=\left(p_0+p_1\theta_t + p_2\sigma_2(\theta_t) \right)x^2$. After solving  
    \[
    \begin{pmatrix}
     \left(t^2+ 2t + 6\right) & \left(t^3+ 3t^2+ 9t+ 3\right) & \left(-t^2- 3t- 6\right) \cr
  \left(t^2+ 4t+ 9\right) & \left(t^3 + 5t^2+ 14t+ 15\right) & \left(-t^2- 4t- 12\right)
    \end{pmatrix}
     \begin{pmatrix}
        p_0 \\ p_1 \\ p_2
    \end{pmatrix}=0
    \]
    with respect to $p_0$, $p_1$, $p_2$, we obtain
    \[
    \psi_5(x) = \left(-7\theta_t^2 + (6t + 2)\theta_t + (t^2 + 6t + 20)\right) x^2 = \left((t^2-t+6)+(-t+2)\theta_t +7\sigma_2(\theta_t) \right)x^2.
    \]
    The unary form $\psi_5(x)$ is a facet vector because
    \[
    \trace_{K/\mathbb{Q}}(\psi_5 (v_1^2)) = 2(t^2+3t+9)>0.
    \]
    We finally have
    \[
    N_5 = s_1+\psi_5=\theta_t^{-2} \sigma_2(\theta_t)^{-2} 2r_1 \sim 2r_1.
    \]
    \underline{Perfect neighbour along the facet $L(v_1, v_2)\cap \Pi_{s_1}$.} Since $\Pi_{s_1}\cap L(v_1,v_2) = \Pi_{r_1}\cap L(v_1,v_2)$ so $2r_1$ and $s_1$ are perfect neighbours along the facet  $L(v_1, v_2)\cap \Pi_{s_1}$. 
\end{itemize}
Since all perfect neighbours $N_1, N_2,\ldots,N_5$ are equivalent to 
(up to homothety) perfect forms $r_1$ or $s_1$. 
\end{proof}

\begin{corollary}
Let $K=\mathbb{Q}(\theta_t)$ be a simplest cubic field. If $ax^2$ is 
a positive definite unary quadratic over $K$, then 
$M(a)\subset \mathbb{Z}[\theta_t]^\times$.
\end{corollary}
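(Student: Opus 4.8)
The plan is to combine the reduction to the domain $\mathcal{F}_K$ with the Voronoï cell structure obtained in the proof of Theorem~\ref{thm3}; throughout I keep the standing assumption $\mathcal{O}_K=\mathbb{Z}[\theta]$ of Theorem~\ref{thm3}, so that $\mathbb{Z}[\theta]^\times=\mathcal{O}_K^\times$. First I would record the equivariance of the minimum data: for a unit $u$ one has $\trace_{K/\mathbb{Q}}((au^2)x^2)=\trace_{K/\mathbb{Q}}(a(ux)^2)$, hence $\mu(au^2)=\mu(a)$ and $\mathcal{M}(au^2)=u^{-1}\mathcal{M}(a)$. In particular the property ``$\mathcal{M}(a)\subseteq\mathcal{O}_K^\times$'' is invariant under $\mathrm{GL}_1(\mathcal{O}_K)$-equivalence, and since every totally positive $a\in K$ is equivalent to an element of $\mathcal{F}_K$, it suffices to prove $\mathcal{M}(a)\subseteq\mathcal{O}_K^\times$ for every $a\in\mathcal{F}_K$.

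Next I would invoke Voronoï's reduction theory (cf.\ \cite[Ch.~7]{mar03}): the cone of positive definite unary forms over $K$ is tiled by the closed polyhedral cells $\Pi_b=\{\sum_{x\in\mathcal{M}(b)}\lambda_x x^2:\lambda_x\ge 0\}$ as $b$ runs over perfect forms, and $\mathcal{M}(a)\subseteq\mathcal{M}(b)$ whenever $a\in\Pi_b$ (on passing from the perfect centre to a form of the cell, minimal vectors can only be lost, never gained). Thus every $a\in\mathcal{F}_K$ lies in some $\Pi_b$ with $b$ perfect, and by Theorem~\ref{thm3} and its proof in the Appendix we have $n_K=2$ and every perfect form is homothetic and $\mathrm{GL}_1(\mathcal{O}_K)$-equivalent to $r_1x^2$ or $\tfrac12 s_1x^2$, so such a $b$ has the shape $\lambda\,u^2\rho$ with $\lambda\in\mathbb{Q}_{>0}$, $u\in\mathcal{O}_K^\times$ and $\rho\in\{r_1,\tfrac12 s_1\}$; the facet/neighbour computations of the Appendix along the hyperplanes $L(v_i,v_j)$ exhibit the cells $\Pi_{r_1},\Pi_{s_1}$, their Galois conjugates, and their shared facets explicitly.

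It then remains to note that the minimal vectors of these perfect forms are units. From the identity $\trace_{K/\mathbb{Q}}(r_1x^2)=(t^2+3t+9)\big((x_0+(t+1)x_2)^2+(x_1-x_2)^2+x_2^2\big)$ one reads off $\mathcal{M}(r_1)=\{\pm 1,\pm\theta,\pm(-(t+1)+\theta+\sigma_2(\theta))\}$, and from the corresponding sum-of-squares identity for $s_1$ one gets $\mathcal{M}(s_1)=\{\pm1,\pm\theta,\pm(1+\theta)\}$. Here $\theta$ is a unit, $1+\theta=-\theta\,\sigma_2(\theta)$ is a product of units, and $\norm_{K/\mathbb{Q}}(-(t+1)+\theta+\sigma_2(\theta))=\pm1$ by a direct computation, so $\mathcal{M}(r_1),\mathcal{M}(s_1)\subseteq\mathcal{O}_K^\times$; by Galois invariance of the sets $\{r_1,r_2,r_3\}$ and $\{s_1,s_2,s_3\}$ the same holds for the conjugate forms. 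Combining the three steps: if $a\in\mathcal{F}_K$ lies in the cell $\Pi_b$ with $b=\lambda u^2\rho$ as above, then $\mathcal{M}(a)\subseteq\mathcal{M}(b)=\mathcal{M}(u^2\rho)=u^{-1}\mathcal{M}(\rho)\subseteq\mathcal{O}_K^\times$, which together with the first paragraph yields the corollary.

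The main obstacle I expect is the bookkeeping in the second step rather than any single hard estimate: one must check that the cells $\Pi_{r_i},\Pi_{s_i}$ ($i=1,2,3$) actually cover $\mathcal{F}_K$ without gaps, and handle forms $a$ lying on lower-dimensional faces of a cell, where $\mathcal{M}(a)$ is the intersection of the minimal-vector sets over all cells meeting that face. The equivariance used in the first step and the norm and minimal-vector computations of the third step are routine and are essentially already carried out in Section~\ref{sec2} and the Appendix.
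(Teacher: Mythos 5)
Your steps 1 and 3 are sound: the equivariance $\mathcal{M}(au^2)=u^{-1}\mathcal{M}(a)$ for $u\in\mathcal{O}_K^\times$, and the verification from the sum-of-squares identities that $\mathcal{M}(r_1)=\{\pm 1,\pm\theta,\pm(-(t+1)+\theta+\sigma_2(\theta))\}$ and $\mathcal{M}(s_1)=\{\pm 1,\pm\theta,\pm(1+\theta)\}$ consist of units. The genuine gap is the pivotal claim in your step 2, namely that $a\in\Pi_b$ forces $\mathcal{M}(a)\subseteq\mathcal{M}(b)$ (``minimal vectors can only be lost, never gained''). That is false: membership in the Voronoi cone $\Pi_b$ spanned by the squares of the minimal vectors of a perfect form gives no control on $\mathcal{M}(a)$. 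Already in the quadratic analogue: take $K=\mathbb{Q}(\sqrt{2})$, $u=1+\sqrt{2}$, and the perfect form $b=2-\sqrt{2}=\sqrt{2}\,u^{-1}$, which has $\mathcal{M}(b)=\{\pm 1,\pm u\}$ and hence $\Pi_b=\{\lambda_1+\lambda_2 u^2:\lambda_1,\lambda_2\geq 0\}$. For small $\varepsilon>0$ the element $a=\varepsilon+u^2\in\Pi_b$ satisfies $\trace_{K/\mathbb{Q}}(au^{-2})=2+6\varepsilon$, while $\trace_{K/\mathbb{Q}}(a)=6+2\varepsilon$ and $\trace_{K/\mathbb{Q}}(au^2)=34+6\varepsilon$, so $\mathcal{M}(a)=\{\pm u^{-1}\}$ is disjoint from $\mathcal{M}(b)$. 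The same phenomenon occurs in the cubic setting (e.g.\ for $\varepsilon+\theta^2\in\Pi_{r_1}$), so this is not a bookkeeping issue about faces of the cells: the inclusion simply goes the wrong way for Voronoi cells, and your covering argument cannot be repaired in that form.

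What the corollary actually rests on is the dual statement, which the paper already appeals to (via Koecher) in the proof of Theorem~\ref{thm1}: for every $a\in K_{>>0}$ there exists a \emph{perfect} $b$ with $\mu(b)=\mu(a)$ and $\mathcal{M}(b)\supseteq\mathcal{M}(a)$. Indeed, after normalising $\mu(a)$, the equalities $\trace_{K/\mathbb{Q}}(qx^2)=\mu(a)$ for $x\in\mathcal{M}(a)$ cut out a face of the locally finite polyhedron $\{q:\mu(q)\geq\mu(a)\}$ containing $a$, and any vertex of that face is a perfect form whose minimal vectors contain $\mathcal{M}(a)$; minimal vectors are gained in passing upward from $a$ to such a perfect form, not preserved in passing downward from a perfect form into its Voronoi cell. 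With this lemma in place your argument closes exactly as you intend: by Theorem~\ref{thm3} (whose hypothesis $\mathcal{O}_K=\mathbb{Z}[\theta_t]$ you correctly retain, and which the corollary also needs) any such $b$ has the form $\lambda u^2\rho$ with $\lambda\in\mathbb{Q}_{>0}$, $u\in\mathcal{O}_K^\times$, $\rho\in\{r_1,\tfrac{1}{2}s_1\}$, whence $\mathcal{M}(a)\subseteq\mathcal{M}(b)=u^{-1}\mathcal{M}(\rho)\subseteq\mathcal{O}_K^\times$. Note that on this route the preliminary reduction to $\mathcal{F}_K$ in your first step becomes unnecessary, though it is harmless.
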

\end{document}